\def\supp{\mathop{\mathrm{supp}}}
\def\diam{\mathop{\mathrm{diam}}}
\def\diag{\mathop{\mathrm{diag}}}
\def\div{\mathop{\mathrm{div}}}
\def\<{\mathop{\textless}}
\def\>{\mathop{\textgreater}}
\def\argmin{\mathop{\mathrm{arg \ min}}}
\theoremstyle{definition}
\newtheorem{thr}{Theorem}[section]
\newtheorem*{ex*}{Example}
\newtheorem{rem}[thr]{Remark}
\newtheorem{defi}[thr]{Definition}
\newtheorem{lem}[thr]{Lemma}
\newtheorem*{pf*}{Proof}
\newtheorem{cor}[thr]{Corollary}
\newtheorem{assume}{Assumption}{\bf}{\it}
\newtheorem*{ass*}{Assumption}
\newtheorem{Cond}[thr]{Condition}
\newtheorem*{exer*}{Exercise}
\newtheorem*{memo*}{Memo}
\newtheorem*{pfth*}{Proof of Theorem 2.6}
\newtheorem*{pfth1*}{Proof of Theorem 2.7}
\newtheorem*{lem21*}{Lemma 2.1}
\newtheorem*{th15*}{Theorem 1.5}
\newtheorem*{th21*}{Theorem 2.1}
\newtheorem*{th25*}{Theorem 2.5}
\newtheorem*{th26*}{Theorem 2.6}
\newcounter{sone}
\newcounter{stwo}
\newcounter{sthree}
\newcounter{sfour}
\newcounter{sfive}
\newcounter{ssix}
\newcounter{lone}
\newcounter{ltwo}
\newcounter{lthree}
\newcounter{lfour}
\newcounter{lfive}
\newcounter{lsix}
\newcounter{lseven}
\newcounter{leight}
\begin{document}

\setcounter{section}{0}
\title{On discrete Sobolev inequalities for nonconforming finite elements under a semi-regular mesh condition}
\author{Hiroki Ishizaka \thanks{Team FEM, Matsuyama, Japan,  h.ishizaka005@gmail.com}}

\date{\today}

\maketitle
\pagestyle{plain}

\begin{abstract}
We derive a discrete $ L^q-L^p$ Sobolev inequality tailored for the Crouzeix--Raviart and discontinuous Crouzeix--Raviart finite element spaces on anisotropic meshes in both two and three dimensions. Subject to a semi-regular mesh condition, this discrete Sobolev inequality is applicable to all pairs $(q,p)$ that align with the local Sobolev embedding, including scenarios where $q \leq p$. Importantly, the constant is influenced solely by the domain and the semi-regular parameter, ensuring robustness against variations in aspect ratios and interior angles of the mesh. The proof employs an anisotropy-sensitive trace inequality that leverages the element height, a two-step affine/Piola mapping approach, the stability of the Raviart--Thomas interpolation, and a discrete integration-by-parts identity augmented with weighted jump/trace terms on faces. This Sobolev inequality serves as a mesh-robust foundation for the stability and error analysis of nonconforming and discontinuous Galerkin methods on highly anisotropic meshes.

\flushleft{{\bf Keywords} Discrete Sobolev inequalies ;Anisotropic (semi-regular) meshes; Crouzeix--Raviart finite elements; Nonconforming finite elements.}

\flushleft{{\bf Mathematics Subject Classification (2020)} 65N30 (primary); 65N15, 65N12, 46E35}
\end{abstract}

\section{Introduction}
\label{sec;introduction}
This paper introduces discrete Sobolev inequalities applicable to nonconforming finite elements on semi-regular (anisotropic) meshes. Specifically, for the Crouzeix--Raviart (CR) finite element space and its discontinuous counterpart, we establish an $L^q - L^p$ ($q \leq p$) bound. {Significantly, the constants associated with this bound are exclusively determined by the dimension, the domain, the exponents $p$ and $q$, and the semi-regular mesh parameter. These constants remain unaffected by the mesh size, as well as the aspect ratios and interior angles of the simplices.} {This extends the classical shape-regular theory (e.g. \cite{Tem74,Tem77,Tho77,Bre03,PieErn12})} to anisotropic partitions and provides a robust tool for stability and error analysis.

Let $\Omega \subset \mathbb{R}^d$, $d \in \{2,3\}$, be a bounded Lipschitz polyhedral domain. {The limitation to $d \in \{2,3\}$ arises from the application of anisotropic Raviart--Thomas (RT) interpolation error estimates on semi-regular meshes as detailed in \cite{Ish22}. These estimates are derived through a two-step affine-transformation technique (combined with the associated Piola transform for RT fields) that employs an explicit curvilinear parametrisation of the geometry of triangles and tetrahedra. Extending this framework to $d \geq 4$ would necessitate a corresponding higher-dimensional geometric parametrisation (or an alternative coordinate-free reformulation) along with new anisotropic bounds for the RT facet moments; such results are currently not available. We leave this extension as a challenging avenue for future research. In contrast, the Bogovski\u{\i}-type right inverse of the divergence, as well as the Poincar\'e and trace inequalities used in this paper, are applicable in any spatial dimension.}
As a representative model problem that motivates the discrete Sobolev inequalities proved below, we recall the Poisson problem: find $u: \Omega \to \mathbb{R}$ such that
\begin{equation}\label{eq:Poisson}
  -\Delta u = f \quad\text{in }\Omega, 
  \qquad u = 0 \quad\text{on }\partial\Omega,
\end{equation}
where $f \in L^2(\Omega)$ is a given function.

For nonconforming and discontinuous Galerkin discretisations of \eqref{eq:Poisson}, discrete Poincar\'e--Sobolev inequalities of the type studied in this paper lead to mesh-robust stability bounds of the form
\begin{align*}
\displaystyle
|u_h|_E \leq C^{\mathrm{stab}} \| f \|_{L^2(\Omega)}.
\end{align*}
where $|\cdot|_E$ denotes the natural energy norm on the discrete space and 
$C^{\mathrm{stab}}$ is independent of the mesh size $h$ and of the aspect ratios and interior angles of the simplices; see, for example, the CR-type schemes analysed in \cite{Ish24a,Ish25b}. This provides one of the main motivations for deriving discrete $L^q - L^p$ Sobolev inequalities under a semi-regular mesh condition in the present work. {Discrete Sobolev embeddings of the $L^q - L^p$ type are a fundamental tool in the analysis of nonlinear problems. For example, they are integral to the stability analysis of nonconforming discretisations of $p$-Laplacian-type models, and more broadly, problems involving nonlinear lower-order terms. In such contexts, it is essential to have a mesh-robust embedding estimate $\| u_h \|_{L^q(\Omega)} \leq C^{\mathrm{pLap}} |u_h|_{W^{1,p}(\mathbb{T}_h)}$, where $C^{\mathrm{pLap}}$ remains independent of the mesh size $h$, as well as the aspect ratios and interior angles of the simplices, to control the nonlinear terms. Similarly, in the case of the stationary incompressible Navier--Stokes equations, these embeddings are employed to estimate the convection term, specifically the associated trilinear form, in terms of broken $W^{1,p}$ seminorms.}


\color{black}

For piecewise $H^1$ functions on general partitions, the Poincar\'e--Friedrichs inequalities are valid with constants that depend solely on the shape-regularity of the partition, without necessitating quasi-uniformity. In two dimensions ($2D$), shape-regularity can be assessed through triangulations that adhere to certain criteria, such as a minimum-angle parameter. In three dimensions ($3D$), it is determined by geometric parameters in conjunction with a uniform face-angle bound, as detailed in \cite{Bre03}.

Discrete Poincar\'e- and Sobolev-type inequalities for nonconforming
CR-type finite element spaces have, in fact, a longer history.
Early instances of such estimates can already be found in the works of Temam
and Thomas in the context of stationary Navier--Stokes equations and mixed or
nonconforming finite element approximations on shape-regular meshes; see,
for example, \cite{Tem74,Tem77,Tho77}. \cite{Bre03} later provides
a systematic treatment of Poincar\'e--Friedrichs inequalities for piecewise
$H^1$ functions on shape-regular partitions {including nonconforming partitions with hanging nodes and edges}.

Anisotropic finite elements, under mesh assumptions related to the classical maximum-angle condition, have been extensively investigated. In \cite{Ape99}, Apel established a comprehensive framework for anisotropic finite elements, offering precise local interpolation estimates and geometric tools for addressing anisotropic simplicial meshes. Building upon this framework, \cite{ApeNicSch01} conducted an in-depth analysis of CR-type finite elements on anisotropic meshes in three dimensions. They derived optimal-order finite element error estimates for a nonconforming approximation of the Poisson problem by integrating anisotropic interpolation bounds with consistency error estimates. To the best of our knowledge, neither \cite{Ape99} nor \cite{ApeNicSch01} explicitly formulates global discrete $L^q - L^p$ Sobolev or Poincar\'e inequalities for CR-type spaces as a primary focus.

Demonstrating the discrete Sobolev (Poincar\'e) inequality on anisotropic meshes presents significant challenges. Recent research has made notable progress in this area. For the discontinuous Galerkin-type finite element space underlying an anisotropic weakly over-penalised symmetric interior penalty (WOPSIP) formulation of the Stokes equations, \cite{Ish24a} established a discrete Poincar\'e inequality on anisotropic semi-regular meshes. For the CR finite element space, \cite{Ish25a} derived discrete $L^q - L^p$ Sobolev inequalities on anisotropic meshes under a semi-regular mesh condition and the weak elliptic regularity assumption. In particular, when $p = q = 2$, the proof in \cite{Ish25a} uses a duality argument posed on a convex domain; see also the references therein for related variants such as hybrid interior penalty and Nitsche-type formulations. These investigations rely on duality-based proofs, for which the convexity of $\Omega$ is crucial in order to validate the inequalities. The present paper develops an alternative approach based on the Bogovski\v{i} operator and RT interpolation, which applies to general bounded Lipschitz polyhedral domains and therefore removes the convexity restriction. {In the context of the Hilbert space where $p=q=2$, related Poincar\'e--Friedrichs-type estimates concerning general decompositions under appropriate trace inequalities can be traced back at least to \cite[Lemma 2.1]{Arn82}. Our findings also address the scenario where $p=q=2$, and we remove the convexity restrictions typically associated with duality and elliptic regularity arguments by utilising the Bogovski\v{i}/RT-based approach on bounded Lipschitz polyhedral domains.}

{Our findings expand the application of discrete Poincar\'e--Friedrichs and Sobolev inequalities to piecewise $H^1$ functions on shape-regular partitions. This expansion includes nonconforming partitions with hanging nodes and edges, as described in \cite{Bre03}, and further extends these inequalities to anisotropic semi-regular simplicial meshes.} The semi-regular mesh condition employed in this paper is expressed in terms of a geometric flatness parameter; for tetrahedral meshes, it was shown in \cite{IshKobSuzTsu21} that this semi-regular condition is in fact equivalent to the classical maximum-angle condition. 
\color{black} 
For $q \> p$, an $L^p-L^2$ estimate is available under a weak elliptic regularity (see Section \ref{Con=rem} and \cite[Lemma 4]{Ish25a}).

Section 2 establishes the notation and anisotropic preliminaries. Section 3 introduces semi-regular meshes and the RT interpolation. Section 4 compiles the Bogovski\u{\i} operator, discrete integration-by-parts, and face estimates. Section 5 demonstrates the primary discrete $L^q-L^p$ inequality, while Section 6 provides concluding remarks.

{Throughout this paper, we denote by $c$ generic constants, independent of $h$ (defined later) and the angles and aspect ratios of simplices; unless specified otherwise, all constants $c$ are bounded if the maximum angle is bounded.} These values vary across different contexts. The notation $\mathbb{R}_+$ denotes a set of positive real numbers.

\section{Preliminaries}
{In this section, we collect the notation, mesh assumptions and anisotropic preliminaries used in the subsequent analysis.}

\label{Preliminaries=sec}

\subsection{Mesh, mesh faces, jumps and averages} \label{sec=mesh}

Let $\mathbb{T}_h$ be a simplicial mesh of $\overline{\Omega}$ composed of closed $d$-simplices. Let $N_e := \#\mathbb{T}_h$ denote the number of elements of the mesh and write
\begin{align*}
\displaystyle
\mathbb{T}_h = \{ T_j \}_{j=1}^{N_e}, \quad \overline{\Omega} = \bigcup_{j=1}^{N_e} T_j.
\end{align*}
For each $j \in \{1,\ldots,N_e\}$, we set
\begin{align*}
\displaystyle
h_{T_j} := \diam(T_j), \quad h := \max_{1 \le j \le N_e} h_{T_j}.
\end{align*}
{For simplicity, we assume that $\mathbb{T}_h$ is conforming; that is, $\mathbb{T}_h$ is a simplicial mesh of $\overline{\Omega}$ with no hanging nodes. Throughout this paper, we assume $0 < h \leq 1$.}

\begin{rem}[Conforming meshes]
The assumption of a conforming mesh (i.e., the absence of hanging nodes) is made solely to simplify the face-based notation and constructions. In particular, it ensures that each interior face $F$ is a single $(d-1)$-simplex shared by exactly two elements, so that the unit normal and the jump and average operators on $F$ are unambiguous. This assumption also streamlines the definition of global face-based finite element spaces, such as the CR-type nonconforming spaces and the $H(\div)$-conforming RT space, where face degrees of freedom (including normal-flux moments for RT) are matched consistently across each interior face.

Meshes with hanging nodes can, in principle, be treated by working on the induced subdivision of the mesh skeleton into subfaces and by redefining the face-based operators and matching conditions accordingly; for CR-type settings, such constructions are classical, see e.g., \cite[Section 2.7]{Bre15}. For the $H(\div)$-conforming RT space, however, hanging faces typically require additional design choices and constraints to enforce consistent matching of normal-flux moments across a coarse face and its subfaces (while preserving the standard commuting and approximation properties). To avoid this additional technical overhead, we restrict ourselves to conforming meshes in the present paper.

\end{rem}

\color{black}
Let $p \in [1,\infty)$ and $s \> 0$ be a positive real number. We define a broken (piecewise) Sobolev space as
\begin{align*}
\displaystyle
W^{s,p}(\mathbb{T}_h) := \{ v \in L^p(\Omega): \ v|_{T} \in W^{s,p}(T) \ \forall T \in \mathbb{T}_h  \}
\end{align*}
{equipped with the broken Sobolev seminorm}
\begin{align*}
\displaystyle
| v |_{W^{s,p}(\mathbb{T}_h)} &:= \left( \sum_{T \in \mathbb{T}_h} | v |^p_{W^{s,p}(T) } \right)^{\frac{1}{p}} \quad \text{if $p \in [1,\infty)$}.
\end{align*}
{In particular}, we write $H^s(\mathbb{T}_h) := W^{s,2}(\mathbb{T}_h)$ {and}
\begin{align*}
\displaystyle
| v |_{H^s(\mathbb{T}_h)} &:= \left( \sum_{T \in \mathbb{T}_h}| v |^2_{H^s(T)} \right)^{\frac{1}{2}} \quad v \in H^s(\mathbb{T}_h).
\end{align*}

We remark that in the present paper, we only use the broken Sobolev space $W^{s,p}(\mathbb{T}_h)$ with the integer order $s=1$, i.e. $W^{1,p}(\mathbb{T}_h)$ and $H^1(\mathbb{T}_h) = W^{1,2}(\mathbb{T}_h)$. In particular, the Bogovski\u{\i}-type right-inverse of the divergence in Lemma~\ref{lem:Bogovskii} is applied only in the classical setting $L^r_0(\Omega) \to  W^{1,r}_0(\Omega)^d$ for $r \in (1,\infty)$. Thus, our use of the Bogovski\u{\i} operator is restricted to the standard integer-order setting, and no results for fractional-order Sobolev spaces are needed.

Furthermore, throughout the present paper, we restrict the exponent to $p \in [1,\infty)$. This is the natural range in which we use the trace theorem for $W^{1,p}$ functions on Lipschitz domains (see, e.g., \cite[Theorem 3.10]{ErnGue21a}), in order to give a precise meaning to face traces and thus to jumps and averages. Since our discrete $L^q - L^p$ Sobolev inequalities in Section \ref{sec;proof=main} are also formulated only for finite exponents $p$ and $q$, the endpoint case  $p = \infty$ is not needed in the sequel.

Let $\mathcal{F}_h^i$ be the set of interior faces, and $\mathcal{F}_h^{\partial}$ be the set of faces on the boundary $\partial \Omega$. We set $\mathcal{F}_h := \mathcal{F}_h^i \cup \mathcal{F}_h^{\partial}$. For any $F \in \mathcal{F}_h$, we define the unit normal $\bm n_F$ to $F$ as follows: (\roman{sone}) If  $F \in \mathcal{F}_h^i$ with $F = \partial T_{+} \cap \partial T_{-}$, $T_{+},T_{-} \in \mathbb{T}_h$, $+ > -$, let $\bm n_F$ be the unit normal vector from $T_{+}$ to  $ T_{-}$.  (\roman{stwo}) If $F \in \mathcal{F}_h^{\partial}$, $\bm n_F$ is {the unit outward normal} $\bm n$ to $\partial \Omega$. We fix once and for all a local ordering of the neighbouring elements and use the symbol "$+ > -$" to indicate that $T_+$ is the first element and $T_-$ the second one in this ordering.

\color{black}
Let $p \in [1,\infty)$. Let $\varphi \in W^{1,p}(\mathbb{T}_h)$. Suppose that $F \in \mathcal{F}_h^i$ with {$F = \partial T_{+} \cap \partial T_{-}$}, $T_{+},T_{-} \in \mathbb{T}_h$. For such a face, we set  $\varphi_{+} := \varphi{|_{T_{+}}}$ and $\varphi_{-} := \varphi{|_{T_{-}}}$.  We choose two nonnegative real numbers $\omega_{T_{+},F}$ and $\omega_{T_{-},F}$ with
\begin{align*}
\displaystyle
\omega_{T_{+},F} + \omega_{T_{-},F} = 1.
\end{align*}
The jump and skew-weighted averages of $\varphi$ across $F$ are then defined as
\begin{align*}
\displaystyle
[\![\varphi]\!] := [\! [ \varphi ]\!]_F := \varphi_{+} - \varphi_{-}, \quad  \{\! \{ \varphi\} \! \}_{\overline{\omega}} :=  \{\! \{ \varphi\} \! \}_{\overline{\omega},F} := \omega_{T_{-},F} \varphi_{+} + \omega_{T_{+},F} \varphi_{-}.
\end{align*}
For a boundary face $F \in \mathcal{F}_h^{\partial}$ with $F = \partial T \cap \partial \Omega$, $[\![\varphi ]\!]_F := \varphi|_{T}$ and $\{\! \{ \varphi \} \!\}_{\overline{\omega}} := \varphi |_{T}$. For any ${\bm {v}} \in W^{1,p}(\mathbb{T}_h)^d$, the notation
\begin{align*}
\displaystyle
&[\![{\bm {v}} \cdot {\bm {n}}]\!] := [\![ {\bm {v}} \cdot {\bm {n}} ]\!]_F := {\bm {v}_{+}} \cdot {\bm {n}_F} - {\bm {v}_{-}} \cdot {\bm {n}_F},\\
&  \{\! \{ {\bm {v}}\} \! \}_{\omega} :=  \{\! \{ {\bm {v}} \} \! \}_{\omega,F} := \omega_{T_{+},F} {\bm {v}_{+}} + \omega_{T_{-},F} {\bm {v}_{-}}
\end{align*}
for the jump in the normal component and the weighted average of ${\bm {v}}$. For any ${\bm {v}} \in W^{1,p}(\mathbb{T}_h)^d$ and $\varphi \in W^{1,p}(\mathbb{T}_h)$, we have that
\begin{align*}
\displaystyle
[\![ ({\bm {v}} \varphi) \cdot {\bm {n}} ]\!]_F
&=  \{\! \{ {\bm {v}} \} \! \}_{\omega,F} \cdot {\bm {n}_F} [\! [ \varphi ]\!]_F + [\![ {\bm {v}} \cdot {\bm {n}} ]\!]_F \{\! \{ \varphi\} \! \}_{\overline{\omega},F}.
\end{align*}
This clarifies the meaning of  "$+ > -$" wherever it is used in the manuscript.

\color{black}
\subsection{Trace inequality} \label{trace=sec}
The trace inequality on anisotropic meshes discussed herein is of considerable importance to this study. The proof of this inequality is documented in several references. In this context, we adhere to the approach outlined by Ern and Guermond \cite[Lemma 12.15]{ErnGue21a}. It is noteworthy that, although Lemma 12.15 in \cite{ErnGue21a} stipulates a shape-regular mesh condition, the condition is easily violated. For a simplex $T \subset \mathbb{R}^d$, let $\mathcal{F}_{T}$ be the collection of the faces of $T$. Let $|\cdot|_d$ denote the $d$-dimensional Hausdorff measure.

\begin{lem}[Trace inequality] \label{lem=trace}
Let $p \in [1,\infty]$. Let  $T \subset \mathbb{R}^d$ be a simplex. There exists a positive constant ${C^{Tr}(d,p)}$ such that for any $\bm v = (v^{(1)}, \ldots,v^{(d)})^{\top}  \in W^{1,p}(T)$, $F \in \mathcal{F}_{T}$, and $h$,
\begin{align}
\displaystyle
\| \bm v \|_{L^p(F)^d}
\leq {C^{Tr}(d,p)} \ell_{T,F}^{- \frac{1}{p}} \left( \| \bm v \|_{L^p(T)^d} + h_{T}^{\frac{1}{p}}  \| \bm v \|_{L^p(T)^d}^{1- \frac{1}{p}} | \bm v |_{W^{1,p}(T)^d}^{\frac{1}{p}} \right),\label{trace}
\end{align}
where {$\ell_{T,F} := \frac{d |T|_d}{|F|_{d-1}}$} denotes the distance of the vertex of $T$ opposite to $F$ to the face. {Here, the constant $C^{Tr}(d,p)$ depends only on the dimension $d$ and the exponent $p$ and is independent of $T$, $F$, the mesh size $h_T$ and the semi-regular parameter $\gamma_0$.}
\end{lem}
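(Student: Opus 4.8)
The plan is to prove the scalar version of \eqref{trace} for a smooth function by a divergence-theorem argument driven by a vector field that isolates the face $F$, and then to recover the full vector-valued statement by summing over components and a density step. The whole point is that this field involves only the height $\ell_{T,F}$ and the diameter $h_T$, so no angle or aspect-ratio of $T$ ever enters the constant. Let $\bm a_F$ be the vertex of $T$ opposite to $F$, let $\bm n_F$ be the outward unit normal on $F$, and let $\eta$ be the orthogonal distance from $\bm a_F$ to the hyperplane of $F$ (so $\eta$ equals $\ell_{T,F}$ up to a factor depending only on $d$). I would introduce the selector field
\begin{align*}
\displaystyle
\bm \sigma(\bm x) := \frac{\bm x - \bm a_F}{\eta}.
\end{align*}
Since $(\bm x - \bm a_F)\cdot \bm n_F = \eta$ for all $\bm x \in F$, we have $\bm \sigma \cdot \bm n_F = 1$ on $F$; since $\bm a_F$ lies on every other face $F' \in \mathcal F_T$, the vector $\bm x - \bm a_F$ is tangential there, so $\bm \sigma \cdot \bm n_{F'} = 0$ on $F' \neq F$. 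Moreover $\div \bm \sigma = d/\eta$, and $|\bm \sigma| \le h_T/\eta$ on $T$ because $|\bm x - \bm a_F| \le \diam(T) = h_T$.

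\textbf{Divergence identity and estimate.} For scalar $v \in C^1(\overline T)$, applying the divergence theorem to $|v|^p \bm \sigma$ and using the boundary behaviour of $\bm \sigma$ collapses the surface term onto $F$:
\begin{align*}
\displaystyle
\int_F |v|^p \, ds = \int_T \div\bigl( |v|^p \bm \sigma \bigr)\, dx = \frac{d}{\eta} \int_T |v|^p\, dx + p \int_T |v|^{p-1}\sign(v)\,(\nabla v \cdot \bm \sigma)\, dx .
\end{align*}
The first term equals $\tfrac{d}{\eta}\|v\|_{L^p(T)}^p$. For the second I would use $|\bm\sigma| \le h_T/\eta$ together with H\"older's inequality (exponents $\tfrac{p}{p-1}$ and $p$; the case $p=1$ is direct) to bound it by $\tfrac{p\, h_T}{\eta}\|v\|_{L^p(T)}^{p-1}|v|_{W^{1,p}(T)}$. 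To obtain the vector statement cleanly I would write this identity for each component $v^{(i)}$, sum over $i$, and apply the discrete H\"older inequality $\sum_i a_i^{p-1}b_i \le (\sum_i a_i^p)^{(p-1)/p}(\sum_i b_i^p)^{1/p}$, which reassembles the norms $\|\bm v\|_{L^p(T)^d}$ and $|\bm v|_{W^{1,p}(T)^d}$ before any roots are taken. Taking $p$-th roots via subadditivity $(a+b)^{1/p}\le a^{1/p}+b^{1/p}$ (valid for $p\ge 1$) then yields \eqref{trace} with a constant $c=c(d,p)$; since $\eta$ and $\ell_{T,F}$ differ only by a $d$-dependent factor, $\eta^{-1/p}$ becomes $\ell_{T,F}^{-1/p}$ after adjusting $c$.

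The construction of $\bm\sigma$ is elementary, so I expect the only genuinely delicate point to be the passage from $C^1(\overline T)$ to arbitrary $\bm v \in W^{1,p}(T)^d$: the divergence identity presupposes enough regularity for the boundary trace to make sense, and one must approximate in $W^{1,p}(T)$ by smooth functions and pass to the limit. The subtlety is mild but real, because the inequality we are proving is itself the statement that the trace map is bounded; the resolution is to prove the bound first for smooth $\bm v$, note that the resulting uniform (shape-independent) constant lets the trace extend continuously to $W^{1,p}(T)^d$, and thereby avoid reintroducing any geometric factor through an off-the-shelf trace theorem. The endpoint $p=\infty$ would be handled separately, either as a limit $p\to\infty$ or directly via $\|\bm v\|_{L^\infty(F)^d}\le \|\bm v\|_{L^\infty(T)^d}$, for which $\ell_{T,F}^{-1/p}=1$.
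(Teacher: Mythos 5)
Your proposal is correct and takes essentially the same route as the paper: the paper's proof simply invokes the argument of Ern--Guermond's Lemma 12.15, which is precisely your divergence-theorem computation with the face-selector field $(\bm x - \bm P_F)$ (a rescaled lowest-order Raviart--Thomas function), yielding the same component-wise bound with $\ell_{T,F}^{-1}$ in place of angle-dependent quantities. The only cosmetic difference is in assembling the vector statement (the paper cites Cauchy--Schwarz and Jensen, you use the discrete H\"older inequality and subadditivity of $t \mapsto t^{1/p}$), and your explicit treatment of the density step and the $p=\infty$ endpoint is a welcome addition rather than a deviation.
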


\begin{pf*}
The estimate is a direct consequence of the anisotropic trace inequality on simplices proved in \cite[Lemma~12.15]{ErnGue21a}. More precisely, applying \cite[Lemma~12.15]{ErnGue21a} to each component $v^{(i)}$ of $\bm v = (v^{(1)}, \ldots,v^{(d)})^{\top}$ and expressing the resulting bound in terms of the height {$\ell_{T,F} =\frac{d |T|_d}{|F|_{d-1}}$} yields the scalar version of \eqref{trace} for $v^{(i)}$. Summing over $i=1,\ldots,d$ and using the Cauchy--Schwarz inequality then gives \eqref{trace} with a constant depending only on $d$ and $p$. We refer to \cite[Lemma~12.15]{ErnGue21a} for the detailed scalar proof.
\end{pf*}

\color{black}
\begin{rem}
Because $|T|_d \approx h_T^d$ and $|F|_{d-1} \approx h_T^{d-1}$ on the shape-regular mesh, it holds that $\ell_{T,F} \approx h_T$. Then, the trace inequality \eqref{trace} is given as
\begin{align*}
\displaystyle
\| v \|_{L^p(F)}
\leq c h_T^{- \frac{1}{p}} \left( \| v \|_{L^p(T)} + h_{T}^{\frac{1}{p}}  \| v \|_{L^p(T)}^{1 - \frac{1}{p}} | v |_{W^{1,p}(T)}^{\frac{1}{p}} \right).
\end{align*}
\end{rem}

\subsection{Finite element spaces} \label{CRspace}
For any $T \in \mathbb{T}_h$ and $F \in \mathcal{F}_h$, let $D \in \{ T , F \}$. For $k \in \mathbb{N}_0 := \mathbb{N} \cup \{ 0 \}$, $\mathbb{P}^k(D)$ is spanned by the restriction to $D$ of polynomials in $\mathbb{P}^k$, where  $\mathbb{P}^k$ denotes the space of polynomials with a maximum of $k$ degrees.

For $k \in \mathbb{N}_0$, we define the standard discontinuous finite-element space as
\begin{align}
\displaystyle
V_{h}^{DC(k)} &:= \left\{ v_h \in {L^{\infty}(\Omega)}; \ v_h|_{T} \in \mathbb{P}^{k}({T}) \quad \forall T \in \mathbb{T}_h  \right\}. \label{dis=sp}
\end{align}

In this paper, we focus exclusively on the lowest-order cases, specifically $k=0$ and $k=1$. The space $V_h^{\mathrm{DC}(0)}$, consisting of piecewise constant functions, is utilised through the elementwise $L^2$-projection $\Pi_{h}^0:L^1(\Omega)\to V_h^{\mathrm{DC}(0)}$. Meanwhile, the CR spaces $V_h^{\mathrm{CR}}$, $V_{h0}^{\mathrm{CR}}$, and the discontinuous CR space $V_h^{\mathrm{DCCR}}$, which are introduced subsequently, are subspaces of $V_h^{\mathrm{DC}(1)}$. The discrete $L^q - L^p$ Sobolev inequalities demonstrated in Section \ref{sec;proof=main} are applicable solely to these lowest-order spaces. {The limitation to $k=0,1$ is not solely a matter of notation. Extending the current framework to encompass higher-order CR-type elements ($k \geq 2$) on semi-regular anisotropic meshes necessitates the development of a corresponding higher-order CR-type theory. This would include anisotropic approximation and interpolation error estimates based on the parameters of the semi-regular mesh. As such results are presently unavailable within the semi-regular anisotropic context under consideration, we do not pursue this extension.}


\color{black}

{Recalling that $\mathbb{T}_h = \{ T_j\}_{j=1}^{Ne}$}, we introduce a discontinuous CR finite element space.

Let the points $\{ \bm P_{T_j,1}, \ldots, \bm P_{T_j,d+1} \}$ be the vertices of the simplex $T_j \in \mathbb{T}_h$ for $j \in \{1, \ldots , Ne \}$. Let $F_{T_j,i}$ be the face of $T_j$ opposite $\bm P_{T_j,i}$ for $i \in \{ 1, \ldots , d+1\}$. We take a set ${\Sigma}_{T_j} := \{ {\chi}^{CR}_{T_j,i} \}_{1 \leq i \leq d+1}$ of linear forms with its components such that for any $q \in \mathbb{P}^1$.
\begin{align}
\displaystyle
{\chi}^{CR}_{T_j,i}({q}) := \frac{1}{| {F}_{T_j,i} |_{d-1}} \int_{{F}_{T_j,i}} {q} d{s} \quad \forall i \in \{ 1, \ldots,d+1 \}. \label{CR1}
\end{align}
For each $j \in \{1, \ldots ,Ne \}$, the triple $\{ T_j ,  \mathbb{P}^1 , \Sigma_{T_j} \}$ is a finite element. Using the barycentric coordinates $ \{{\lambda}_{T_j,i} \}_{i=1}^{d+1}: \mathbb{R}^d \to \mathbb{R}$ on the reference element, the nodal basis functions associated with the degrees of freedom by \eqref{CR1} are defined as
\begin{align}
\displaystyle
{\theta}^{CR}_{T_j,i}(\bm {x}) := d \left( \frac{1}{d} - {\lambda}_{T_j,i} (\bm {x}) \right) \quad \forall i \in \{ 1, \ldots ,d+1 \}. \label{CR2}
\end{align}
For $j \in \{1, \ldots ,Ne \}$ and $i \in \{ 1, \ldots , d+1\}$, we define the function $\phi_{j(i)}$ as
\begin{align}
\displaystyle
\phi_{j(i)}(\bm x) :=
\begin{cases}
\theta_{T_j,i}^{CR}(\bm x), \quad \text{$\bm x \in T_j$}, \\
0, \quad \text{$x \notin T_j$}.
\end{cases} \label{CR5}
\end{align}
We define a discontinuous CR finite element space as
\begin{align}
\displaystyle
V_{h}^{DCCR} &:= \left\{ \sum_{j=1}^{Ne} \sum_{i=1}^{d+1} c_{j(i)} \phi_{j(i)}; \  c_{j(i)} \in \mathbb{R}, \ \forall i,j \right\} {=} V_{h}^{DC(1)}. \label{CR6}
\end{align}
Furthermore, we define standard CR finite element spaces as
\begin{align}
\displaystyle
V_{h}^{CR} &:= \biggl \{ \varphi_h \in V_{h}^{DCCR}: \  \int_F [\![ \varphi_h ]\!] ds = 0 \ \forall F \in \mathcal{F}_h^i \biggr \}, \label{CRdef} \\
V_{h0}^{CR} &:= \biggl \{ \varphi_h \in V_{h}^{CR}: \  \int_F \varphi_h  ds = 0 \ \forall F \in \mathcal{F}_h^{\partial} \biggr \}. \label{CR0def}
\end{align}
We define $V_h \in \{V_{h}^{DCCR}, V_h^{CR},V_{h0}^{CR} \}$.

\begin{rem}
On each element $T_j$, the local CR basis functions $\{ \theta_{T_j,i}^{CR} \}_{i=1}^{d+1}$ form a basis of $\mathbb{P}^{1}({T}_j)$, and $\phi_{j(i)}$ are exactly these functions extended by zero outside $T_j$. Therefore, $V_{h}^{DCCR} = V_{h}^{DC(1)}$. We keep the notation $V_{h}^{DCCR}$ to emphasise that we use CR-type degrees of freedom and basis functions on each element.
\end{rem}

\color{black}
\subsection{Norms} \label{CRnorms}
Let $F \in \mathcal{F}_h^i$ with {$F = \partial T_{+} \cap \partial T_{-}$}, $T_{+},T_{-} \in \mathbb{T}_h$, $+ > -$ be an interior face and $F \in \mathcal{F}_h^{\partial}$ with $F = \partial T_{\partial} \cap \partial \Omega$, $T_{\partial} \in \mathbb{T}_h$ a boundary face. Let $p \in [1,\infty)$. A choice for the weighted parameters is such that
\begin{align}
\displaystyle
 \omega_{T_i,F} :=  \frac{{\ell_{T_i,F}^{\frac{p-1}{p}}}}{{\ell_{T_{+},F}^{\frac{p-1}{p}}} + {\ell_{T_{-},F}^{\frac{p-1}{p}}}}, \quad i \in \{+,- \}. \label{weight}
\end{align}

The weights $ \omega_{T_i,F}$ satisfy $ \omega_{T_i,F} \in [0,1]$ and $ \omega_{T_+,F} +  \omega_{T_-,F} = 1$  and they depend only on the face heights $\ell_{T_i,F}$. They will be used in the definition of weighted averages on interior faces so that the contribution of each neighbouring element is tuned according to its height in the normal direction to $F$.

\color{black}
For the proof of the discrete Sobolev inequality, we will use the following parameter.
\begin{align}
\displaystyle
\kappa_{p,F*} :=
\begin{cases}
\displaystyle
\left( { \ell_{T_{+},F}^{\frac{p-1}{p}}} + {\ell_{T_{-},F}^{\frac{p-1}{p}}} \right)^{-p} \quad \text{if $F \in \mathcal{F}_h^i$},\\
\displaystyle
\ell_{T_{\partial},F}^{1-p} \quad \text{if $F \in \mathcal{F}_h^{\partial}$}.
\end{cases} \label{penalty1}
\end{align}

where $\ell_{T,F}$ is the height of $T$ with respect to $F$ in Section \ref{trace=sec}. This parameter is constructed solely from the local face heights \(\ell_{T,F}\) and plays the role of an inverse normal mesh size on each face. On a regular (shape-regular and quasi-uniform) mesh, one has \(\kappa_{p,F}^* \approx h_F^{-1} \approx h^{-1}\), so that \(\kappa_{p,F}^*\) grows as the mesh is refined. For an explicit formula and several examples illustrating its behaviour on anisotropic meshes, we refer to \cite[Section 2.4]{Ish24a}.

\color{black}
Let $p \in [1,\infty)$ and $V_h \in \{V_{h}^{DCCR}, V_h^{CR},V_{h0}^{CR} \}$. {We define the following mesh-dependent functionals for any $\varphi_h \in V_h$;}
\begin{align*}
\displaystyle
| \varphi_h |_{p,V_h} &:= \left( | \varphi_h |_{W^{1,p}(\mathbb{T}_h)}^p + | \varphi_h |_{p,J}^p \right)^{\frac{1}{p}} \text{ with }  | \varphi_h |_{p,J} := \left( \sum_{F \in \mathcal{F}_h} \kappa_{p,F*} \| \Pi_F^{0} [\![ \varphi_h ]\!] \|_{L^p(F)}^p  \right)^{\frac{1}{p}},
\end{align*}
where for any $F \in \mathcal{F}_h$, we define the $L^2$-projection $\Pi_F^{0}: L^2(F) \to \mathbb{P}^{0}(F)$ as
\begin{align*}
\displaystyle
\int_F (\Pi_F^{0} \varphi - \varphi)   ds = 0 \quad \forall \varphi \in L^2(F).
\end{align*}

\begin{rem}
The definitions of $|\cdot|_{p,J}$ and $|\cdot|_{p,V_h}$ are applicable to any function $v \in W^{1,p}(\mathbb{T}_h)$, thereby establishing mesh-dependent seminorms on the broken Sobolev space $W^{1,p}(\mathbb{T}_h)$. In this study, we focus on utilising these seminorms within the finite element spaces $V_h \in \{V_h^{{DCCR}}, V_h^{{CR}}, V_{h0}^{{CR}}\}$.

In addition, within each of these discrete spaces, $|\cdot|_{p,V_h}$ serves as a norm. Specifically, if $|v_h|_{p,V_h} = 0$, it follows that $|v_h|_{W^{1,p}(\mathbb{T}_h)} = 0$, indicating that $v_h$ is piecewise constant over $\mathbb{T}_h$. For the case where $V_h = V_h^{{DCCR}}$, the condition $|v_h|_{p,J} = 0$ ensures that the constants on each element are uniform across adjacent elements and vanish on boundary faces, resulting in $v_h \equiv 0$. For $V_h = V_h^{{CR}}$ or $V_h = V_{h0}^{{CR}}$, the CR continuity conditions across interior faces, and in the latter case, the boundary moment conditions, necessitate that any piecewise constant function must be identically zero. Thus, $|\cdot|_{p,V_h}$ defines a norm on each of the spaces $V_h^{{DCCR}}, V_h^{{CR}}, V_{h0}^{{CR}}$.
\end{rem}

\color{black}
\subsection{Reference elements} \label{reference}
We first define the reference elements $\widehat{T} \subset \mathbb{R}^d$.

\subsubsection{Two-dimensional case} \label{reference2d}
Let $\widehat{T} \subset \mathbb{R}^2$ be a reference triangle with vertices $\hat{\bm p}_1 := (0,0){^{\top}}$, $\hat{\bm p}_2 := (1,0){^{\top}}$, and $\hat{\bm p}_3 := (0,1){^{\top}}$. 

\subsubsection{Three-dimensional case} \label{reference3d}
In the three-dimensional case, we consider the following two cases: (\roman{sone}) and (\roman{stwo}); see Condition \ref{cond2}.

Let $\widehat{T}_1$ and $\widehat{T}_2$ be reference tetrahedra with the following vertices:
\begin{description}
   \item[(\roman{sone})] $\widehat{T}_1$ has vertices $\hat{\bm p}_1 := (0,0,0){^{\top}}$, $\hat{\bm p}_2 := (1,0,0){^{\top}}$, $\hat{\bm p}_3 := (0,1,0){^{\top}}$, and $\hat{\bm p}_4 := (0,0,1){^{\top}}$;
 \item[(\roman{stwo})] $\widehat{T}_2$ has vertices $\hat{\bm p}_1 := (0,0,0){^{\top}}$, $\hat{\bm p}_2 := (1,0,0){^{\top}}$, $\hat{\bm p}_3 := (1,1,0){^{\top}}$, and $\hat{\bm p}_4 := (0,0,1){^{\top}}$.
\end{description}
Therefore, we set $\widehat{T} \in \{ \widehat{T}_1 , \widehat{T}_2 \}$. 
Note that the case (\roman{sone}) is called \textit{the regular vertex property}.

\subsection{Two-step affine mapping} \label{element=cond}
In anisotropic meshes, the mesh shape and element proportions are non-uniform, which directly affects the interpolation accuracy.  Existing interpolation error estimates typically assume an even or regular mesh, which may overestimate the error if applied directly to anisotropic meshes. To remedy this, we proposed {in \cite{Ish22,IshKobTsu21a,IshKobTsu23}} a new strategy on anisotropic meshes.

To an affine simplex $T \subset \mathbb{R}^d$, we construct two affine mappings $\Phi_{\widetilde{T}}: \widehat{T} \to \widetilde{T}$ and $\Phi_{T}: \widetilde{T} \to T$. First, we define the affine mapping $\Phi_{\widetilde{T}}: \widehat{T} \to \widetilde{T}$ as
\begin{align}
\displaystyle
\Phi_{\widetilde{T}}: \widehat{T} \ni \hat{\bm x} \mapsto \tilde{\bm x} := \Phi_{\widetilde{T}}(\hat{\bm x}) := {A}_{\widetilde{T}} \hat{\bm x} \in  \widetilde{T}, \label{aff=1}
\end{align}
where ${A}_{\widetilde{T}} \in \mathbb{R}^{d \times d}$ is an invertible matrix. We then define the affine mapping $\Phi_{T}: \widetilde{T} \to T$ as follows:
\begin{align}
\displaystyle
\Phi_{T}: \widetilde{T} \ni \tilde{\bm x} \mapsto \bm x := \Phi_{T}(\tilde{\bm x}) := {A}_{T} \tilde{\bm x} + \bm {b_{T}} \in T, \label{aff=2}
\end{align}
where $\bm {b_{T}} \in \mathbb{R}^d$ is a vector and ${A}_{T} \in O(d)$ denotes the rotation and mirror-imaging matrix. We define the affine mapping $\Phi: \widehat{T} \to T$ as
\begin{align*}
\displaystyle
\Phi := {\Phi}_{T} \circ {\Phi}_{\widetilde{T}}: \widehat{T} \ni \hat{\bm x} \mapsto \bm x := \Phi (\hat{\bm x}) =  ({\Phi}_{T} \circ {\Phi}_{\widetilde{T}})(\hat{\bm x}) = {A} \hat{\bm x} +\bm{ b_{T}} \in T, 
\end{align*}
where ${A} := {A}_{T} {A}_{\widetilde{T}} \in \mathbb{R}^{d \times d}$.

\subsubsection{Construct mapping $\Phi_{\widetilde{T}}: \widehat{T} \to \widetilde{T}$} \label{sec221} 
We consider the affine mapping \eqref{aff=1}. We define the matrix $ {A}_{\widetilde{T}} \in \mathbb{R}^{d \times d}$ as follows. We first define the diagonal matrix as
\begin{align}
\displaystyle
\widehat{A} :=  \diag (h_1,\ldots,h_d), \quad h_i \in \mathbb{R}_+ \quad \forall i. \label{aff=3}
\end{align}

For $d=2$, we define the regular matrix $\widetilde{A} \in \mathbb{R}^{2 \times 2}$ as
\begin{align}
\displaystyle
\widetilde{A} :=
\begin{pmatrix}
1 & s \\
0 & t \\
\end{pmatrix}, \label{aff=4}
\end{align}
with the parameters
\begin{align*}
\displaystyle
s^2 + t^2 = 1, \quad t \> 0.
\end{align*}
For the reference element $\widehat{T}$, let $\mathfrak{T}^{(2)}$ be a family of triangles.
\begin{align*}
\displaystyle
\widetilde{T} &= \Phi_{\widetilde{T}}(\widehat{T}) = {A}_{\widetilde{T}} (\widehat{T}), \quad {A}_{\widetilde{T}} := \widetilde {A} \widehat{A}
\end{align*}
with the vertices $\tilde{\bm p}_1 := (0,0)^{\top}$, $\tilde{\bm p}_2 := (h_1,0)^{\top}$ and $\tilde{\bm p}_3 :=(h_2 s , h_2 t)^{\top}$. Then, $h_1 = |\tilde{\bm p}_1 - \tilde{\bm p}_2| \> 0$ and $h_2 = |\tilde{\bm p}_1 - \tilde{\bm p}_3| \> 0$. 

For $d=3$, we define the regular matrices $\widetilde{A}_1, \widetilde{A}_2 \in \mathbb{R}^{3 \times 3}$ as follows:
\begin{align}
\displaystyle
\widetilde{A}_1 :=
\begin{pmatrix}
1 & s_1 & s_{21} \\
0 & t_1  & s_{22}\\
0 & 0  & t_2\\
\end{pmatrix}, \
\widetilde{A}_2 :=
\begin{pmatrix}
1 & - s_1 & s_{21} \\
0 & t_1  & s_{22}\\
0 & 0  & t_2\\
\end{pmatrix} \label{aff=5}
\end{align}
with the parameters
\begin{align*}
\displaystyle
\begin{cases}
s_1^2 + t_1^2 = 1, \ s_1 \> 0, \ t_1 \> 0, \ h_2 s_1 \leq h_1 / 2, \\
s_{21}^2 + s_{22}^2 + t_2^2 = 1, \ t_2 \> 0, \ h_3 s_{21} \leq h_1 / 2.
\end{cases}
\end{align*}
Therefore, we set $\widetilde{A} \in \{ \widetilde{A}_1 , \widetilde{A}_2 \}$. For the reference elements $\widehat{T}_i$, $i=1,2$, let $\mathfrak{T}_i^{(3)}$, $i=1,2$, be a family of tetrahedra.
\begin{align*}
\displaystyle
\widetilde{T}_i &= \Phi_{\widetilde{T}_i} (\widehat{T}_i) =  {A}_{\widetilde{T}_i} (\widehat{T}_i), \quad {A}_{\widetilde{T}_i} := \widetilde {A}_i \widehat{A}, \quad i=1,2,
\end{align*}
with the vertices
\begin{align*}
\displaystyle
&\tilde{\bm p}_1 := (0,0,0)^{\top}, \ \tilde{\bm p}_2 := (h_1,0,0)^{\top}, \ \tilde{\bm p}_4 := (h_3 s_{21}, h_3 s_{22}, h_3 t_2)^{\top}, \\
&\begin{cases}
\tilde{\bm p}_3 := (h_2 s_1 , h_2 t_1 , 0)^{\top} \quad \text{for case (\roman{sone})}, \\
\tilde{\bm p}_3 := (h_1 - h_2 s_1, h_2 t_1,0)^{\top} \quad \text{for case (\roman{stwo})}.
\end{cases}
\end{align*}
Subsequently, $h_1 = |\tilde{\bm p}_1 - \tilde{\bm p}_2| \> 0$, $h_3 = |\tilde{\bm p}_1 - \tilde{\bm p}_4| \> 0$, and
\begin{align*}
\displaystyle
h_2 =
\begin{cases}
|\tilde{\bm p}_1 - \tilde{\bm p}_3| \> 0  \quad \text{for case (\roman{sone})}, \\
|\tilde{\bm p}_2 - \tilde{\bm p}_3| \> 0  \quad \text{for case (\roman{stwo})}.
\end{cases}
\end{align*}

\subsubsection{Construct mapping $\Phi_{T}: \widetilde{T} \to T$}  \label{sec322}
We determine the affine mapping \eqref{aff=2} as follows. Let ${T} \in \mathbb{T}_h$ have vertices ${p}_i$ ($i=1,\ldots,d+1$). Let $b_{T} \in \mathbb{R}^d$ be the vector and ${A}_{T} \in O(d)$ be the rotation and mirror imaging matrix such that
\begin{align*}
\displaystyle
\bm p_{i} = \Phi_T (\tilde{\bm p}_i) = {A}_{T} \tilde{\bm p}_i + \bm{b_T}, \quad i \in \{1, \ldots,d+1 \},
\end{align*}
where vertices $\bm p_{i}$ ($i=1,\ldots,d+1$) satisfy the following conditions:

\color{black}
\begin{Cond}[Case in which $d=2$] \label{cond1}
Let ${T} \in \mathbb{T}_h$ have vertices ${\bm p_i}$ ($i=1,\ldots,3$). We assume that $\overline{{{\bm p}_2 {\bm p}_3}}$ is the longest edge of ${T}$, that is, $ h_{{T}} := |{{\bm p}_2 - {\bm p}_ 3}|$. We set $h_1 = |{{\bm p}_1 - {\bm p}_2}|$ and $h_2 = |{{\bm p}_1 - {\bm p}_3}|$. We then assume that $h_2 \leq h_1$. {Because $\frac{1}{2} h_T < h_1 \leq h_T$, ${h_1 \approx h_T}$.} 
\end{Cond}

\begin{Cond}[Case in which $d=3$] \label{cond2}
Let ${T} \in \mathbb{T}_h$ have vertices ${{\bm p}}_i$ ($i=1,\ldots,4$). Let ${L}_i$ ($1 \leq i \leq 6$) be the edges of ${T}$. We denote by ${L}_{\min}$  the edge of ${T}$ with the minimum length; that is, $|{L}_{\min}| = \min_{1 \leq i \leq 6} |{L}_i|$. We set $h_2 := |{L}_{\min}|$ and assume that 
\begin{align*}
\displaystyle
&\text{the endpoints of ${L}_{\min}$ are either $\{ {{\bm p}_1 , {\bm p}_3} \}$ or $\{ {{\bm p}_2 , {\bm p}_3}\}$}.
\end{align*}
Among the four edges sharing an endpoint with ${L}_{\min}$, we consider the longest edge ${L}^{({\min})}_{\max}$. Let ${{\bm p}}_1$ and ${\bm p}_2$ be the endpoints of edge ${L}^{({\min})}_{\max}$. Thus, we have
\begin{align*}
\displaystyle
h_1 = |{L}^{(\min)}_{\max}| = | {{\bm p}_1 - {\bm p}_2}|.
\end{align*}
We consider cutting $\mathbb{R}^3$ with a plane that contains the midpoint of the edge ${L}^{(\min)}_{\max}$ and is perpendicular to the vector ${{\bm p}_1 - {\bm p}_2}$. Thus, there are two cases. 
\begin{description}
  \item[(Type \roman{sone})] ${\bm p}_3$ and ${\bm p}_4$  belong to the same half-space;
  \item[(Type \roman{stwo})] ${\bm p}_3$ and ${\bm p}_4$  belong to different half-spaces.
\end{description}
In each case, we set
\begin{description}
  \item[(Type \roman{sone})] ${\bm p}_1$ and ${\bm p}_3$ as the endpoints of ${L}_{\min}$, that is, $h_2 =  |{\bm p}_1 - {\bm p}_3| $;
  \item[(Type \roman{stwo})] ${\bm p}_2$ and ${\bm p}_3$ as the endpoints of ${L}_{\min}$, that is, $h_2 =  |{\bm p}_2 - {\bm p}_3| $.
\end{description}
Finally, we set $h_3 = |{\bm p}_1 - {\bm p}_4|$. We implicitly assume that ${\bm p}_1$ and ${\bm p}_4$ belong to the same half-space. Additionally, note that ${h_1 \approx h_T}$.
\end{Cond}

\begin{lem} \label{lem351}
It holds that
\begin{subequations} \label{CN331}
\begin{align}
\displaystyle
\| \widehat{{A}} \|_2 &\leq  h_{T}, \quad \| \widehat{{A}} \|_2 \| \widehat{{A}}^{-1} \|_2 = \frac{\max \{h_1 , \cdots, h_d \}}{\min \{h_1 , \cdots, h_d \}}, \label{CN331a} \\
\| \widetilde{{A}} \|_2 &\leq 
\begin{cases}
\sqrt{2} \quad \text{if $d=2$}, \\
2  \quad \text{if $d=3$},
\end{cases}
\quad \| \widetilde{{A}} \|_2 \| \widetilde{{A}}^{-1} \|_2 \leq
\begin{cases}
\frac{h_1 h_2}{|T|_2} = \frac{H_{T}}{h_{T}} \quad \text{if $d=2$}, \\
\frac{2}{3} \frac{h_1 h_2 h_3}{|T|_3} = \frac{2}{3} \frac{H_{T}}{h_{T}} \quad \text{if $d=3$},
\end{cases}
\label{CN331b} \\
\| {A}_{T} \|_2 &= 1, \quad \| {A}_{T}^{-1} \|_2 = 1. \label{CN331c}
\end{align}
\end{subequations}
where a parameter $H_{T}$ is defined in Definition \ref{defi1}. Furthermore, we have
\begin{align}
\displaystyle
| \det ({A}_{\widetilde{T}}) | = | \det(\widetilde{{A}}) | | \det (\widehat{{A}}) | = \frac{|T|_d}{|\widetilde{T}|_d} \frac{|\widetilde{T}|_d}{|\widehat{T}|_d} = d ! |T|_d, \quad | \det ({A}_{T}) | = 1, \label{CN332}
\end{align}
where $\| {A} \|_2$ denotes the operator norm of ${A}$.
\end{lem}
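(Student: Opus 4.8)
The plan is to handle the three factors $\widehat A$, $\widetilde A$ and $A_T$ of $A = A_T\widetilde A\widehat A$ one at a time, exploiting multiplicativity of the determinant and the singular-value characterisation $\|B\|_2 = \sigma_{\max}(B)$, $\|B^{-1}\|_2 = \sigma_{\min}(B)^{-1}$, so that the product $\|B\|_2\|B^{-1}\|_2 = \sigma_{\max}(B)/\sigma_{\min}(B)$ is exactly the spectral condition number. This reduces every claim to an eigenvalue/singular-value computation for each factor.

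The orthogonal and diagonal factors are immediate. Since $A_T \in O(d)$ is an isometry, $\|A_T\|_2 = \|A_T^{-1}\|_2 = 1$ and $|\det A_T| = 1$, which gives \eqref{CN331c}. Since $\widehat A = \diag(h_1,\dots,h_d)$ has positive diagonal, $\|\widehat A\|_2 = \max_i h_i$ and $\|\widehat A^{-1}\|_2 = (\min_i h_i)^{-1}$; because each $h_i$ is the length of an edge of $T$ (Conditions \ref{cond1}, \ref{cond2}) and the diameter of a simplex equals its longest edge, $\max_i h_i \le h_T$, and $\|\widehat A\|_2\|\widehat A^{-1}\|_2 = \max_i h_i/\min_i h_i$ follows at once, settling \eqref{CN331a}. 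The determinant identity \eqref{CN332} is then obtained by combining $|\det A_{\widetilde T}| = |\det\widetilde A|\,|\det\widehat A|$ with the volume-scaling relation $|\widetilde T|_d = |\det A_{\widetilde T}|\,|\widehat T|_d$: using $|\widehat T|_d = 1/d!$, $|\widetilde T|_d = |T|_d$ (as $A_T$ is volume preserving), and the triangular form of $\widetilde A$ (so $|\det\widetilde A| = t$ for $d=2$ and $t_1 t_2$ for $d=3$), one reads off $|\det A_{\widetilde T}| = d!\,|T|_d$.

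The substantive work is the analysis of $\widetilde A$, and the structural fact I would exploit throughout is that the parameter constraints $s^2+t^2 = 1$ and $s_1^2+t_1^2 = s_{21}^2+s_{22}^2+t_2^2 = 1$ make every column of $\widetilde A$ a unit vector, so $\|\widetilde A\|_F^2 = d$. For $d=2$ a direct computation gives $\widetilde A^\top\widetilde A = \bigl(\begin{smallmatrix}1 & s\\ s & 1\end{smallmatrix}\bigr)$, hence $\sigma_{\max}^2 = 1+|s|$ and $\sigma_{\min}^2 = 1-|s|$; thus $\|\widetilde A\|_2 = \sqrt{1+|s|}\le\sqrt2$, and since $\sigma_{\max}\sigma_{\min} = |\det\widetilde A| = t$ the condition number equals $\sigma_{\max}^2/t = (1+|s|)/t$. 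Because $|s|\le 1$ and $h_1h_2/|T|_2 = 2/t$ (from $|T|_2 = \tfrac12 t\,h_1h_2$), this is bounded by $2/t = h_1h_2/|T|_2$, giving \eqref{CN331b} for $d=2$. For $d=3$ the norm bound is just $\|\widetilde A\|_2 \le \|\widetilde A\|_F = \sqrt3 \le 2$.

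The main obstacle, and the only step requiring genuine care, is the three-dimensional condition number, where the stated constant $\tfrac23$ must be recovered. Writing $\sigma_1\ge\sigma_2\ge\sigma_3$ for the singular values of $\widetilde A$ and using $\sigma_1\sigma_2\sigma_3 = |\det\widetilde A| = t_1t_2$, the condition number is $\sigma_1/\sigma_3 = \sigma_1^2\sigma_2/(t_1t_2)$; a naive bound $\sigma_1 \le \sqrt3$ gives only $3\sqrt3/(t_1t_2)$, which is too large, so I would instead bound $\sigma_1^2\sigma_2$ using solely the Frobenius constraint $\sigma_1^2+\sigma_2^2+\sigma_3^2 = 3$. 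An elementary optimisation (maximising $\sigma_1^2\sigma_2$ under this constraint, with the extremum at $\sigma_3 = 0$, $\sigma_2 = 1$, $\sigma_1 = \sqrt2$) yields $\sigma_1^2\sigma_2 \le 2$, so the condition number is at most $2/(t_1t_2) \le 4/(t_1t_2) = \tfrac23\,h_1h_2h_3/|T|_3$, where $|T|_3 = \tfrac16 t_1t_2\,h_1h_2h_3$ comes from \eqref{CN332}. This completes \eqref{CN331b}; the closing identities $h_1h_2/|T|_2 = H_T/h_T$ and $h_1h_2h_3/|T|_3 = H_T/h_T$ are then just the definition of $H_T$ in Definition \ref{defi1}.
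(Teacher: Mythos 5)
Your proposal is correct, but a step-by-step comparison with the paper is impossible for a simple reason: the paper does not prove Lemma \ref{lem351} at all, it only cites \cite[Lemma 2]{IshKobTsu23}. Your argument is thus a self-contained substitute, and every step checks out. The routine parts are fine: $A_T\in O(d)$ gives \eqref{CN331c}; $\widehat A$ diagonal with entries that are edge lengths of $T$ (Conditions \ref{cond1} and \ref{cond2}) gives \eqref{CN331a}, since the diameter of a simplex is its longest edge; and \eqref{CN332} follows from $|\widehat T|_d = 1/d!$, volume invariance under $A_T$, and multiplicativity of the determinant. In two dimensions the computation $\widetilde A^{\top}\widetilde A = \bigl(\begin{smallmatrix}1 & s\\ s & 1\end{smallmatrix}\bigr)$ with eigenvalues $1\pm|s|$ gives $\|\widetilde A\|_2 = \sqrt{1+|s|}\le \sqrt2$ and, via $\sigma_{\max}\sigma_{\min}=t$, the condition number $(1+|s|)/t\le 2/t = h_1h_2/|T|_2$, exactly as claimed. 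The only delicate step, the three-dimensional condition number, is where your argument adds real value: the unit-column structure gives $\sigma_1^2+\sigma_2^2+\sigma_3^2=3$, the determinant gives $\sigma_1\sigma_2\sigma_3=t_1t_2$, and your elementary maximization of $\sigma_1^2\sigma_2$ under the Frobenius constraint (maximum value $2$, attained at $(\sigma_1,\sigma_2,\sigma_3)=(\sqrt2,1,0)$, which respects the ordering $\sigma_1\ge\sigma_2$) yields $\sigma_1/\sigma_3 = \sigma_1^2\sigma_2/(t_1t_2)\le 2/(t_1t_2)$. Since $|T|_3=\tfrac16 t_1t_2\,h_1h_2h_3$ by \eqref{CN332}, this is $\tfrac13\,h_1h_2h_3/|T|_3$, i.e.\ strictly sharper than the constant $\tfrac23$ stated in \eqref{CN331b}; the lemma's bound follows a fortiori. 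What your route buys is a fully verifiable, elementary proof inside the paper (and an improved $3$D constant); what the citation buys is only brevity and consistency with \cite{IshKobTsu23}. Nothing downstream in the paper depends on which of the two constants is used, since only the qualitative bound $\|\widetilde A\|_2\|\widetilde A^{-1}\|_2\le c\,H_T/h_T$ enters the later estimates.
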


\begin{pf*}
A proof can be found in \cite[Lemma 2]{IshKobTsu23}.	
\end{pf*}

\subsection{Two-step Piola transforms}
We adopt the following two-step Piola transformations.

\begin{defi}[Two-step Piola transforms] \label{piola=defi}
Let  $V(\widehat{T}) := \mathcal{C}(\widehat{T})^d$. The Piola transformation $\Psi := {\Psi}_{\widetilde{T}} \circ {\Psi}_{\widehat{T}} : V(\widehat{T}) \to V(T)$ is defined as
\begin{align}
\displaystyle
\Psi : V(\widehat{T})  &\to V(T) \label{piola} \\
\hat{\bm v} &\mapsto \bm v(\bm x) :=  {\Psi}(\hat{\bm v})(\bm x) = \frac{1}{\det({A})} {A} \hat{\bm v}(\hat{\bm x}), \notag
\end{align}
with	two Piola transformations:
\begin{align*}
\displaystyle
{\Psi}_{\widehat{T}}: V(\widehat{T}) &\to V(\widetilde{T})\\
\hat{\bm v} &\mapsto \tilde{\bm v}(\tilde{\bm x}) := {\Psi}_{\widehat{T}}(\hat{\bm v})(\tilde{\bm x}) := \frac{1}{\det ({A}_{\widetilde{T}})} {A}_{\widetilde{T}}\hat{\bm v}(\hat{\bm x}), \\
\Psi_{\widetilde{T}} :  V(\widetilde{T}) &\to V({T})  \\
\tilde{\bm v} &\mapsto \bm v(\bm x) :=  {\Psi}_{\widetilde{T}} (\tilde{\bm v})(\bm x) :=  \frac{1}{\det ({{A}}_{T}) } {A}_{T} \tilde{\bm v} (\tilde{\bm x}).
\end{align*}
\end{defi}

\subsection{Additional notations} \label{addinot}
We define the vectors ${\bm{r}}_n \in \mathbb{R}^d$, $n=1,\ldots,d$ as follows: If $d=2$,
{
\begin{align*}
\displaystyle
{\bm r}_1 := \frac{{\bm p}_2 - {\bm p}_1}{|{\bm p}_2 - {\bm p}_1|}, \quad {\bm r}_2 := \frac{{\bm p}_3 - {\bm p}_1}{|{\bm p}_3 - {\bm p}_1|},
\end{align*}
see Fig. \ref{affine_2d}, and if $d=3$,
\begin{align*}
\displaystyle
&{\bm r}_1 := \frac{{\bm p}_2 - {\bm p}_1}{|{ \bm p}_2 - {\bm p}_1|}, \quad {\bm r}_3 := \frac{{\bm p}_4 - {\bm p}_1}{|{\bm p}_4 - {\bm p}_1|}, \quad
\begin{cases}
\displaystyle
{\bm r}_2 := \frac{{\bm p}_3 - {\bm p}_1}{|{\bm p}_3 - {\bm p}_1|}, \quad \text{for (Type \roman{sone})}, \\
\displaystyle
{\bm r}_2 := \frac{{\bm p}_3 - {\bm p}_2}{|{\bm p}_3 - {\bm p}_2|} \quad \text{for (Type \roman{stwo})},
\end{cases}
\end{align*}
}
see Fig \ref{affine_3d_1} for (Type \roman{sone}) and Fig \ref{affine_3d_2} for (Type \roman{stwo}). 

\begin{figure}[htbp]
  \includegraphics[keepaspectratio, scale=0.45]{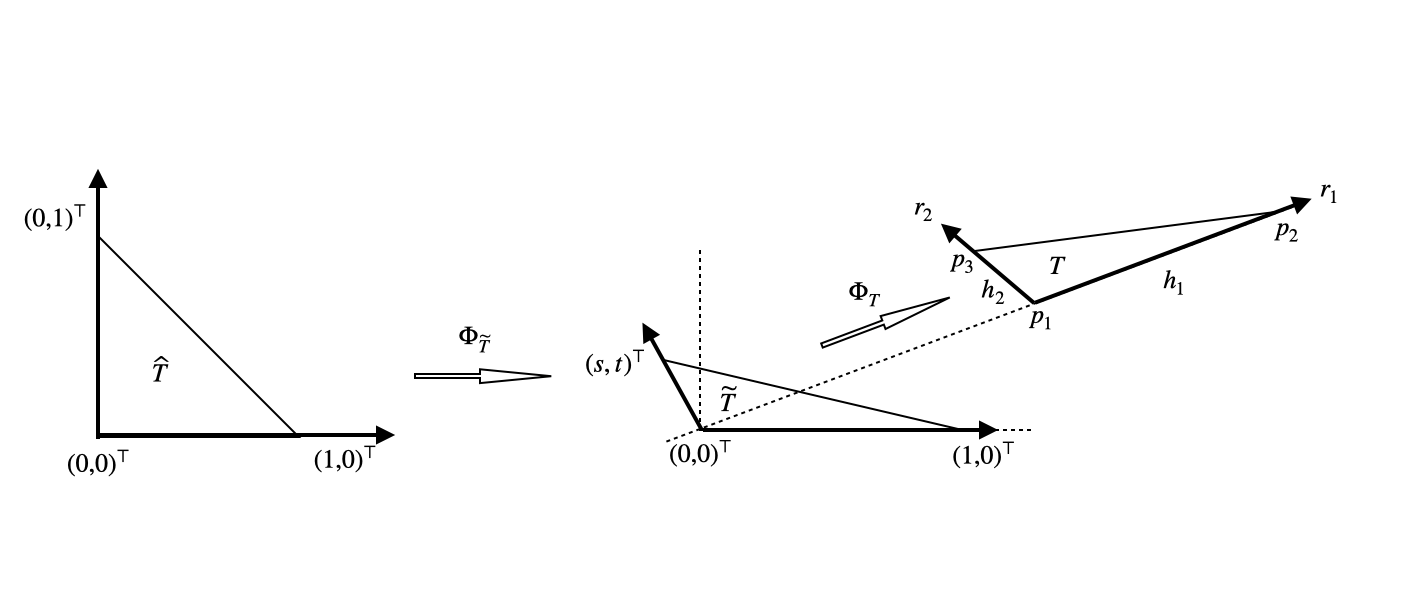}
\caption{Two-step affine mapping and vectors $r_i$, $i=1,2$}
\label{affine_2d}
\end{figure}

\begin{figure}[htbp]
  \begin{minipage}[b]{0.45\linewidth}
    \centering
    \includegraphics[keepaspectratio, scale=0.45]{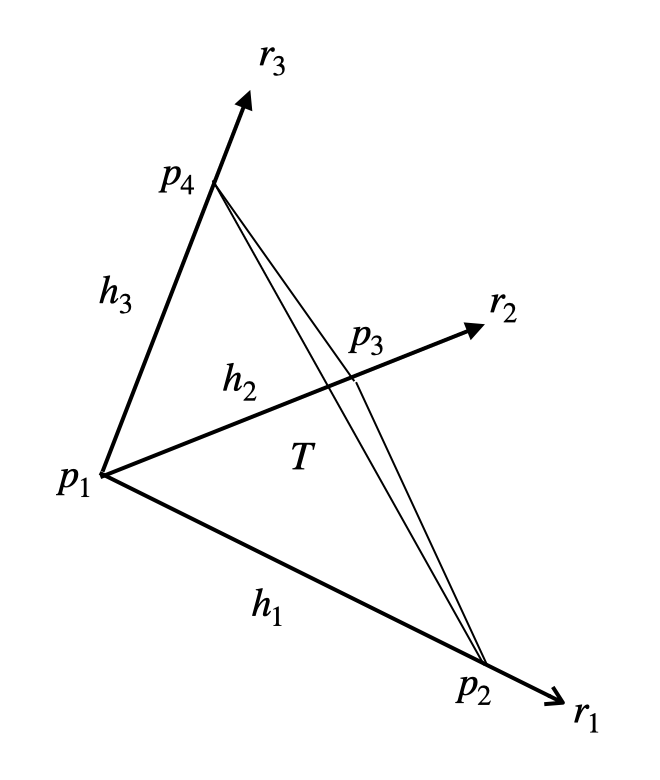}
    \caption{(Type \roman{sone}) Vectors $r_i$, $i=1,2,3$}
     \label{affine_3d_1}
  \end{minipage}
  \begin{minipage}[b]{0.45\linewidth}
    \centering
    \includegraphics[keepaspectratio, scale=0.45]{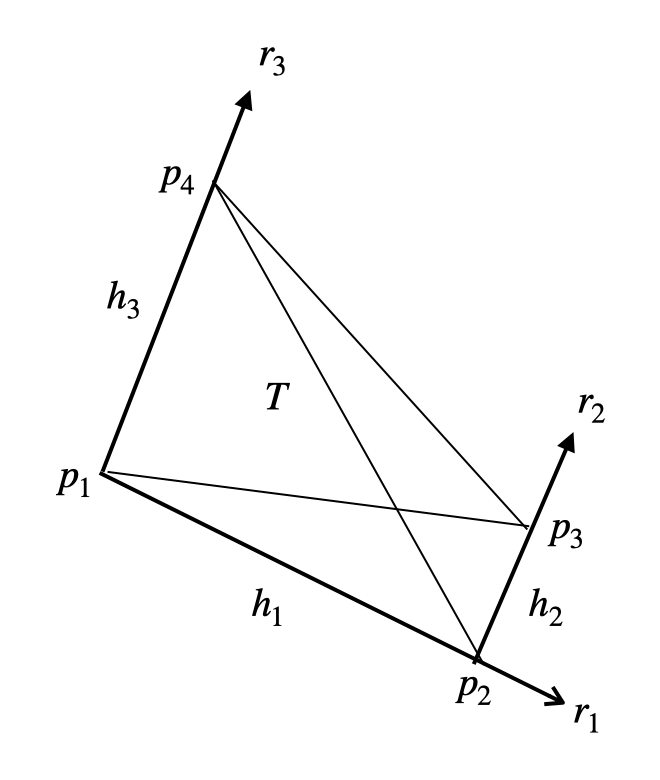}
    \caption{(Type \roman{stwo}) Vectors $r_i$, $i=1,2,3$}
     \label{affine_3d_2}
  \end{minipage}
\end{figure}

For a sufficiently smooth function $\varphi$ and a vector function ${\bm v} := (v_{1},\ldots,v_{d})^{\top}$, we define the directional derivative for $i \in \{ 1, \ldots,d \}$ as
{
\begin{align*}
\displaystyle
\frac{\partial \varphi}{\partial {{\bm r}_i}} &:= ( {\bm r}_i \cdot  {\bm \nabla}_{x} ) \varphi = \sum_{i_0=1}^d ({\bm r}_i)_{i_0} \frac{\partial \varphi}{\partial x_{i_0}^{}}, \\
\frac{\partial {\bm v}}{\partial {\bm r}_i} &:= \left(\frac{\partial v_{1}}{\partial {\bm r}_i}, \ldots, \frac{\partial v_{d}}{\partial {\bm r}_i} \right)^{\top} 
= ( ({\bm r}_i  \cdot {\bm \nabla}_{x}) v_{1}, \ldots, ({\bm r}_i  \cdot {\bm \nabla}_{x} ) v_{d} )^{\top}.
\end{align*}
}
For a multiindex $\beta = (\beta_1,\ldots,\beta_d) \in \mathbb{N}_0^d$, we use the notation
{
\begin{align}
\displaystyle
\partial^{\beta} \varphi := \frac{\partial^{|\beta|} \varphi}{\partial x_1^{\beta_1} \ldots \partial x_d^{\beta_d}}, \quad \partial^{\beta}_{r} \varphi := \frac{\partial^{|\beta|} \varphi}{\partial {\bm r}_1^{\beta_1} \ldots \partial {\bm r}_d^{\beta_d}}, \quad h^{\beta} :=  h_{1}^{\beta_1} \cdots h_{d}^{\beta_d}. \label{partial=sym}
\end{align}
}
We note that $\partial^{\beta} \varphi \neq  \partial^{\beta}_{r} \varphi$.

\subsection{$L^2$-orthogonal projection} \label{sec=L2ortho}
This section considers error estimates of the $L^2$-orthogonal projection, e.g., for a standard argument, see \cite[Section 11.5.3]{ErnGue21a}. Here, the discussion is based on the two-step affine mapping.

 Let $\widehat{T} \subset \mathbb{R}^d$ be the reference element defined in Section \ref{reference}. The $L^2$-orthogonal projection onto $\widehat{P} := \mathbb{P}^0(\widehat{T})$ is the linear operator ${\Pi}_{\widehat{T}}^{0}: L^1(\widehat{T}) \to \widehat{P}$ defined as
\begin{align}
\displaystyle
\int_{\widehat{T}} ({\Pi}_{\widehat{T}}^{0} \hat{\varphi} - \hat{\varphi}) d \hat{x} = 0 \quad \forall \hat{\varphi} \in L^1(\widehat{T}). \label{L2ortho=ref}
\end{align}
Because ${\Pi}_{\widehat{T}}^{0} \hat{\varphi} - \hat{\varphi}$ and ${\Pi}_{\widehat{T}}^{0} \hat{\varphi} - \hat{q}$ are $L^2$-orthogonal for any $\hat{q} \in \widehat{P}$, the Pythagorean identity yields 
\begin{align*}
\displaystyle
\| \hat{\varphi} - \hat{q} \|^2_{L^2(\widehat{T})}
= \| \hat{\varphi} - {\Pi}_{\widehat{T}}^{0} \hat{\varphi} \|^2_{L^2(\widehat{T})} + \| {\Pi}_{\widehat{T}}^{0} \hat{\varphi} - \hat{q} \|^2_{L^2(\widehat{T})}.
\end{align*}
This implies that
\begin{align*}
\displaystyle
\Pi_{\widehat{T}}^{0} \hat{\varphi} = \argmin_{\hat{q} \in \widehat{P}} \| \hat{\varphi} - \hat{q} \|_{L^2(\widehat{T})}.
\end{align*}
Therefore, $\widehat{P}$ is pointwise invariant under $\Pi_{\widehat{T}}^{0}$. Let $\Phi_{\widetilde{T}}: \widehat{T} \to \widetilde{T}$ and $\Phi_{T}: \widetilde{T} \to T$ be the two affine mappings defined in Section \ref{element=cond}. For any ${T} \in \mathbb{T}_h$ with $\widetilde{T} = {\Phi}_{\widetilde{T}}(\widehat{T})$ and $T ={\Phi}_{T} (\widetilde{T})$, let $\hat{\varphi} :=  \tilde{\varphi} \circ {\Phi_{\widetilde{T}}}$ and $ \tilde{\varphi} := \varphi \circ {\Phi_T}$. Furthermore, we set
\begin{align*}
\displaystyle
 \widetilde{P} &:= \{ \hat{q} \circ \Phi_{\widetilde{T}}^{-1} ; \ \hat{q} \in \widehat{{P}}\}, \\
 {P} &:= \{ \tilde{q} \circ \Phi_T ; \ \tilde{q} \in \widetilde{{P}}\}.
\end{align*}
The $L^2$-orthogonal projections onto $\widehat{P}$ and $P$ are respectively the linear operators $\Pi^0_{\widetilde{T}}: L^1(\widetilde{T}) \to \widetilde{P}$ and $\Pi^0_{{T}}: L^1({T}) \to {P}$ defined as
\begin{align*}
\displaystyle
&\int_{\widetilde{T}} ({\Pi}_{\widetilde{T}}^{0} \tilde{\varphi} - \tilde{\varphi}) d \tilde{x} = 0 \quad \forall \tilde{\varphi} \in L^1(\widetilde{T}), \\
&\int_{{T}} ({\Pi}_{{T}}^{0} {\varphi} - {\varphi}) d {x} = 0 \quad \forall {\varphi} \in L^1({T}).
\end{align*}
Then,  $\widetilde{P}$ and $P$ are respectively pointwise invariant under $\Pi_{\widetilde{T}}^{0}$ and $\Pi_T^0$.

We also define the global interpolation $\Pi_h^0$ to space $V_{h}^{DC(0)}$ as
\begin{align*}
\displaystyle
(\Pi_h^0 \varphi)|_{T} := \Pi_{{T}}^0 (\varphi|_{T}) \quad \forall T \in \mathbb{T}_h, \quad \forall \varphi \in L^1(\Omega).
\end{align*}

\begin{lem}
Let $q \in [1,\infty)$. It holds that
\begin{align}
\displaystyle
{
\| \Pi_{\widehat{T}}^0 \hat{\varphi} \|_{L^q(\widehat{T})} 
\leq \| \hat{\varphi} \|_{L^q(\widehat{T})}} \quad \forall \hat{\varphi} \in L^{q}(\widehat{T}). \label{chapp821}
\end{align}
\end{lem}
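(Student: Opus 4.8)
The plan is to use the explicit mean-value representation of the projection together with Jensen's inequality, which will in fact deliver the claim with $c = 1$. Since $\widehat{P} = \mathbb{P}^0(\widehat{T})$ is the space of constant functions on the fixed reference element, the orthogonality condition \eqref{L2ortho=ref} determines $\Pi_{\widehat{T}}^0 \hat{\varphi}$ uniquely as the constant whose integral over $\widehat{T}$ matches that of $\hat{\varphi}$; hence
\begin{align*}
\displaystyle
\Pi_{\widehat{T}}^0 \hat{\varphi} = \frac{1}{|\widehat{T}|_d} \int_{\widehat{T}} \hat{\varphi} \, d\hat{x}.
\end{align*}
First I would record this identity, observing that it is meaningful for every $\hat{\varphi} \in L^q(\widehat{T})$ because $\widehat{T}$ is bounded, so that $L^q(\widehat{T}) \hookrightarrow L^1(\widehat{T})$ and the projection of Section \ref{reference} applies.

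The next step is to estimate the $L^q$-norm of this constant. Applying Jensen's inequality to the convex map $t \mapsto |t|^q$ with respect to the probability measure $|\widehat{T}|_d^{-1} \, d\hat{x}$ gives
\begin{align*}
\displaystyle
\left| \frac{1}{|\widehat{T}|_d} \int_{\widehat{T}} \hat{\varphi} \, d\hat{x} \right|^q
\leq \frac{1}{|\widehat{T}|_d} \int_{\widehat{T}} |\hat{\varphi}|^q \, d\hat{x}.
\end{align*}
Multiplying through by $|\widehat{T}|_d$ and recognising the left-hand side as $\| \Pi_{\widehat{T}}^0 \hat{\varphi} \|_{L^q(\widehat{T})}^q$ — the $L^q$-norm of a constant function over $\widehat{T}$ — yields $\| \Pi_{\widehat{T}}^0 \hat{\varphi} \|_{L^q(\widehat{T})} \leq \| \hat{\varphi} \|_{L^q(\widehat{T})}$, that is, the assertion with $c = 1$. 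Equivalently, one could bound $\int_{\widehat{T}} |\hat{\varphi}| \, d\hat{x}$ by Hölder's inequality with exponents $q$ and $q' = q/(q-1)$ and reach the same constant.

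There is essentially no obstacle here: the inequality is a soft consequence of convexity and invokes neither the mesh geometry, the anisotropic scaling, nor the two-step mappings — these enter only later, when this reference-element estimate is transported to a general simplex $T$ through the affine maps of Section \ref{element=cond}. The single point meriting a word is the well-definedness of the mean on $L^q(\widehat{T})$, which is immediate since the reference element has finite, fixed measure.
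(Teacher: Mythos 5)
Your proof is correct, but it takes a genuinely different route from the paper's. The paper never writes down the explicit formula for $\Pi_{\widehat{T}}^0$; instead it argues abstractly: since $\widehat{P}$ is finite-dimensional, all norms on it are equivalent, which gives $\| \Pi_{\widehat{T}}^0 \hat{\varphi} \|_{L^q(\widehat{T})} \leq \hat{c}_1 \| \Pi_{\widehat{T}}^0 \hat{\varphi} \|_{L^2(\widehat{T})}$ and $\| \Pi_{\widehat{T}}^0 \hat{\varphi} \|_{L^{q'}(\widehat{T})} \leq \hat{c}_2 \| \Pi_{\widehat{T}}^0 \hat{\varphi} \|_{L^q(\widehat{T})}$; it then uses the orthogonality relation \eqref{L2ortho=ref} (tested with $\hat{q} := \Pi_{\widehat{T}}^0 \hat{\varphi}$, which is admissible because $\Pi_{\widehat{T}}^0 \hat{\varphi}$ is constant) to write $\| \Pi_{\widehat{T}}^0 \hat{\varphi} \|_{L^2(\widehat{T})}^2 = \int_{\widehat{T}} \hat{\varphi} \, \Pi_{\widehat{T}}^0 \hat{\varphi} \, d\hat{x}$, and closes the loop with H\"older's inequality and a division. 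Your argument — the explicit mean-value representation plus Jensen — is more elementary, avoids all unspecified reference-element constants, and yields the sharper result $c = 1$, which the paper's norm-equivalence constants $\hat{c}_1, \hat{c}_2$ cannot deliver. What the paper's approach buys in exchange is generality: the finite-dimensionality argument works verbatim for projections onto $\mathbb{P}^k(\widehat{T})$ with $k \geq 1$, where no closed-form expression for the projection exists and Jensen is unavailable. Since the lemma here concerns only $\mathbb{P}^0$, your proof is entirely adequate, and arguably preferable for this specific statement.
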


\begin{pf*}
Let $\hat{\varphi} \in L^{q}(\widehat{T})$. From the definition of the $L^2$-orthogonal projection \eqref{L2ortho=ref},
\begin{align*}
\displaystyle
\Pi_{\widehat{T}}^0 \hat{\varphi}
= \frac{1}{|\widehat{T}|_d} \int_{\widehat{T}} \hat{\varphi} d \hat{x} \in \mathbb{P}^0(\widehat{T}).
\end{align*}
Using the H\"older's inequality with $\frac{1}{q} + \frac{1}{q^{\prime}} = 1$ yields
\begin{align*}
\displaystyle
| \Pi_{\widehat{T}}^0 \hat{\varphi} |
= \frac{1}{|\widehat{T}|_d} \left|  \int_{\widehat{T}} \hat{\varphi} d \hat{x} \right|
\leq  \frac{1}{|\widehat{T}|_d} \| \hat{\varphi} \|_{L^q(\widehat{T})} \| 1 \|_{L^{q^{\prime}}(\widehat{T})} = \frac{1}{|\widehat{T}|_d} \| \hat{\varphi} \|_{L^q(\widehat{T})} |\widehat{T}|_d^{\frac{1}{q^{\prime}}} = |\widehat{T}|_d^{\frac{1}{q^{\prime}}-1} \| \hat{\varphi} \|_{L^q(\widehat{T})},
\end{align*}
which leads to
\begin{align*}
\displaystyle
\| \Pi_{\widehat{T}}^0 \hat{\varphi} \|_{L^q(\widehat{T})} 
&= \left( \int_{\widehat{T}} | \Pi_{\widehat{T}}^0 \hat{\varphi} |^q d \hat{x} \right)^{\frac{1}{q}} =  | \Pi_{\widehat{T}}^0 \hat{\varphi} |  |\widehat{T}|_d^{\frac{1}{q}} \\
&\leq  |\widehat{T}|_d^{\frac{1}{q} + \frac{1}{q^{\prime}}-1} \| \hat{\varphi} \|_{L^q(\widehat{T})} = \| \hat{\varphi} \|_{L^q(\widehat{T})}.
\end{align*}
\end{pf*}

\color{black}
The following theorem gives an anisotropic error estimate of the projection $\Pi_T^0$.

\begin{thr} \label{thr822}
Let $p \in [1,\infty)$ and $q \in [1,\infty)$ be such that
\begin{align}
\displaystyle
W^{1,p}({T}) \hookrightarrow L^q({T}), \label{Sobolev511}
\end{align}
that is $1 - \frac{d}{p} \geq - \frac{d}{q}$. It then holds that, for any $\hat{\varphi} \in W^{1,p}(\widehat{T})$ with ${\varphi} := \hat{\varphi} \circ {\Phi}^{-1}$,
\begin{align}
\displaystyle
\| \Pi_{T}^0 \varphi - \varphi \|_{L^q(T)} \leq {C^{L^2}(d,p,q)} |T|_d^{\frac{1}{q} - \frac{1}{p}} \sum_{i=1}^d h_i \left\| \frac{\partial \varphi}{\partial \bm r_i} \right\|_{L^p(T)}, \label{chap822}
\end{align}
{where $C^{L^2}(d,p,q)$ depends only on the dimension $d$, the exponents $p$ and $q$ and is independent of the physical element $T$, the mesh size $h$, and the semi-regular parameter.
}
\end{thr}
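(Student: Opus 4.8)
The plan is to pull everything back to the reference element $\widehat{T}$, use the $L^q$-stability \eqref{chapp821} of $\Pi_{\widehat{T}}^0$ there, and then push the resulting $W^{1,p}$-seminorm forward to $T$ along the two-step map $\Phi = \Phi_T \circ \Phi_{\widetilde{T}}$, letting the diagonal factor $\widehat{A} = \diag(h_1,\ldots,h_d)$ carry all the anisotropy. First I would prove the reference estimate
\[
\| \Pi_{\widehat{T}}^0 \hat{\varphi} - \hat{\varphi} \|_{L^q(\widehat{T})} \leq c \, | \hat{\varphi} |_{W^{1,p}(\widehat{T})}.
\]
Since $\Pi_{\widehat{T}}^0$ reproduces constants, for every $\hat{c} \in \mathbb{R}$ we have $\Pi_{\widehat{T}}^0 \hat{\varphi} - \hat{\varphi} = \Pi_{\widehat{T}}^0(\hat{\varphi} - \hat{c}) - (\hat{\varphi} - \hat{c})$; bounding the first summand by \eqref{chapp821} and using the triangle inequality gives $\| \Pi_{\widehat{T}}^0 \hat{\varphi} - \hat{\varphi} \|_{L^q(\widehat{T})} \leq (c+1) \| \hat{\varphi} - \hat{c} \|_{L^q(\widehat{T})}$. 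Choosing $\hat{c}$ to be the mean of $\hat{\varphi}$ and applying the Poincar\'e--Wirtinger inequality together with the embedding $W^{1,p}(\widehat{T}) \hookrightarrow L^q(\widehat{T})$ granted by \eqref{Sobolev511} (equivalently, the Deny--Lions lemma on the fixed Lipschitz domain $\widehat{T}$) bounds the right-hand side by $c | \hat{\varphi} |_{W^{1,p}(\widehat{T})}$.

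Next I would record the commutation identity $(\Pi_T^0 \varphi - \varphi) \circ \Phi = \Pi_{\widehat{T}}^0 \hat{\varphi} - \hat{\varphi}$, which holds because both projections are the respective means and the constant Jacobian cancels against the volume ratio $|T|_d = |\det(A)| \, |\widehat{T}|_d$. A change of variables $x = \Phi(\hat{x})$ then yields $\| \Pi_T^0 \varphi - \varphi \|_{L^q(T)} = |\det(A)|^{1/q} \| \Pi_{\widehat{T}}^0 \hat{\varphi} - \hat{\varphi} \|_{L^q(\widehat{T})}$, so that, combined with the reference estimate,
\[
\| \Pi_T^0 \varphi - \varphi \|_{L^q(T)} \leq c \, |\det(A)|^{1/q} \, | \hat{\varphi} |_{W^{1,p}(\widehat{T})}.
\]

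The decisive anisotropic step is to rewrite $| \hat{\varphi} |_{W^{1,p}(\widehat{T})}$ through the directional derivatives on $T$. The chain rule for $\hat{\varphi} = \varphi \circ \Phi$ gives $\frac{\partial \hat{\varphi}}{\partial \hat{x}_i} = (A \hat{\bm e}_i) \cdot (\nabla_x \varphi) \circ \Phi$, where $A \hat{\bm e}_i$ is the $i$-th column of $A = A_T \widetilde{A} \widehat{A}$. Because $\widehat{A}$ scales that column by $h_i$, because the columns of $\widetilde{A}$ are unit vectors (from $s^2 + t^2 = 1$, etc.), and because $A_T \in O(d)$ carries them onto the unit edge directions $\bm r_i$, one obtains $A \hat{\bm e}_i = h_i \bm r_i$ and hence $\frac{\partial \hat{\varphi}}{\partial \hat{x}_i} = h_i \, ( \frac{\partial \varphi}{\partial \bm r_i} ) \circ \Phi$. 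Transporting the seminorm back to $T$ by the same change of variables produces the factor $|\det(A)|^{-1/p}$, giving $| \hat{\varphi} |_{W^{1,p}(\widehat{T})} = |\det(A)|^{-1/p} \big( \sum_{i=1}^d h_i^p \| \frac{\partial \varphi}{\partial \bm r_i} \|_{L^p(T)}^p \big)^{1/p}$.

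Combining the two displayed bounds and inserting $|\det(A)| = d! \, |T|_d$ from \eqref{CN332} produces the factor $|T|_d^{1/q - 1/p}$ (the fixed power of $d!$ being absorbed into $c$), after which $\ell^p \leq \ell^1$ replaces the $\ell^p$-sum by $\sum_{i=1}^d h_i \| \frac{\partial \varphi}{\partial \bm r_i} \|_{L^p(T)}$, which is exactly \eqref{chap822}. I expect the main obstacle to lie in the identity $A \hat{\bm e}_i = h_i \bm r_i$: it must be verified from the explicit forms of $\widetilde{A}_1, \widetilde{A}_2$ and the vertex conventions of Conditions \ref{cond1}--\ref{cond2}, treating the $d=2$ case and the (Type I)/(Type II) subcases in $d=3$ separately, since the reference basis vector matched to each $\bm r_i$ changes with which vertices anchor the longest and shortest edges.
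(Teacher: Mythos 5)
Your proposal is correct and takes essentially the same route as the paper's proof: reduce to the reference element by the volume scaling $|\det(A)|^{1/q}$, exploit constant-invariance of the projection together with the $L^q$-stability \eqref{chapp821} and a Poincar\'e/Deny--Lions (equivalently Bramble--Hilbert) bound there, and then transport the $W^{1,p}$-seminorm back to $T$ anisotropically using $|\det(A_{\widetilde{T}})| = d!\,|T|_d$. The only difference is that the paper invokes \cite[Lemma 6]{IshKobTsu23} as a black box for the seminorm scaling, whereas you re-derive it inline from the chain rule and the identity $A\hat{\bm e}_i = h_i \bm r_i$, which does hold in all cases (including Type \roman{stwo}, where $\hat{\bm e}_2 = \hat{\bm p}_3 - \hat{\bm p}_2$ so that $A\hat{\bm e}_2 = \bm p_3 - \bm p_2 = h_2 \bm r_2$), consistent with the vertex conventions of Conditions \ref{cond1} and \ref{cond2}.
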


\begin{pf*}
Let $p\in[1,\infty)$ and $q\in[1,\infty)$ be such that \eqref{Sobolev511} holds. Let ${\varphi} = \hat{\varphi} \circ {\Phi}^{-1}$.

By the invariance of $P$ under the $L^2$-projection $\Pi_{T}^0$ (see Section \ref{sec=L2ortho}) and by the definition of the affine mapping  $\Phi: \widehat{T} \to T$, we have
\begin{align*}
\displaystyle
\Pi_{T}^0 \varphi - \varphi
=  (\Pi_{\widehat{T}}^0 \hat{\varphi} - \hat{\varphi}) \circ {\Phi}^{-1},
\end{align*}
so that a change of variables $x = \Phi(\hat x)$ yields
\begin{align}
\displaystyle
\| \Pi_{T}^0 \varphi - \varphi \|_{L^q(T)}
&= |\det({A}_{\widetilde{T}})|^{\frac{1}{q}} \| \Pi_{\widehat{T}}^0 \hat{\varphi} - \hat{\varphi} \|_{L^q(\widehat{T})}. \label{chap824}
\end{align}
where we used $|\det (A_T)| = 1$. We set
\begin{align*}
\displaystyle
\hat{\eta} := \frac{1}{|\widehat{T}|_d} \int_{\widehat{T}} \hat{\varphi} d \hat{x} \in \mathbb{P}^0(\widehat{T}).
\end{align*}
From the triangle inequality and $\Pi_{\widehat{T}}^0 \hat{\eta} = \hat{\eta}$, we have
\begin{align}
\displaystyle
\| \Pi_{\widehat{T}}^0 \hat{\varphi} - \hat{\varphi} \|_{L^q(\widehat{T})}
&\leq \| \Pi_{\widehat{T}}^0  (\hat{\varphi} - \hat{\eta}) \|_{L^q(\widehat{T})}  + \| \hat{\eta} - \hat{\varphi} \|_{L^q(\widehat{T})}. \label{chap825}
\end{align}
Using \eqref{chapp821} for the first term on the right-hand side of \eqref{chap825}, we have
\begin{align}
\displaystyle
\| \Pi_{\widehat{T}}^0  (\hat{\varphi} - \hat{\eta}) \|_{L^q(\widehat{T})} 
\leq \| \hat{\varphi} - \hat{\eta} \|_{L^q(\widehat{T})}. \label{chap826}
\end{align}
For the second term on the right-hand side of \eqref{chap825} and \eqref{chap826}, by the Sobolev embedding theorem on the fixed reference simplex $\widehat{T}$, there exists a constant $C^{Sob} = C^{Sob}(d,p,q) > 0$ such that
\begin{align}
\displaystyle
\| \hat{\varphi} - \hat{\eta} \|_{L^q(\widehat{T})} \leq C^{Sob} \| \hat{\varphi} - \hat{\eta} \|_{W^{1,p}(\widehat{T})}, \label{chap827}
\end{align}
where $C^{Sob}$ depends only on the dimension $d$, the exponents $p$ and $q$
(and on the shape of the fixed reference simplex $\widehat{T}$), and is independent of the physical element $T$, the mesh size $h$, and the semi-regular parameter.

Combining \eqref{chap824}, \eqref{chap825}, \eqref{chap826}, and \eqref{chap827}, we have
\begin{align}
\displaystyle
\| \Pi_{T}^0 \varphi - \varphi \|_{L^q(T)}
&\leq 2 C^{Sob} |\det({A}_{\widetilde{T}})|^{\frac{1}{q}} \| \hat{\varphi} - \hat{\eta} \|_{W^{1,p}(\widehat{T})}. \label{chap828}
\end{align}
From {the Poincar\'e--Steklov} inequality (e.g., see \cite[Lemma 3.24]{ErnGue21a}),
\begin{align*}
\displaystyle
\| \hat{\varphi} - \hat{\eta} \|_{W^{1,p}(\widehat{T})}^p
&= | \hat{\varphi}  |_{W^{1,p}(\widehat{T})}^p + \| \hat{\varphi} - \hat{\eta} \|_{L^{p}(\widehat{T})}^p \\
&\leq  | \hat{\varphi}  |_{W^{1,p}(\widehat{T})}^p + C^{PS}(p) h_{\widehat{T}}^p  | \hat{\varphi}  |_{W^{1,p}(\widehat{T})}^p,
\end{align*}
which leads to
\begin{align}
\displaystyle
\| \hat{\varphi} - \hat{\eta} \|_{W^{1,p}(\widehat{T})}
&\leq C^{PS1}(d,p) | \hat{\varphi}  |_{W^{1,p}(\widehat{T})}, \label{chap829}
\end{align}
where we set
\begin{align*}
\displaystyle
C^{PS1}(d,p) := \left( 1 +  C^{PS}(p) h_{\widehat{T}}^p \right)^{\frac{1}{p}}.
\end{align*}
Because the reference simplex $\widehat{T}$ is fixed once and for all, $h_{\widehat{T}}$ is a dimension-dependent constant and $C^{PS1}(d,p)$ depends only on $d$ and $p$.

Using the new scaling argument \cite[Lemma 6]{IshKobTsu23} ($\ell = 1$ and $m=0$), we then have
\begin{align}
\displaystyle
 |\hat{\varphi}|_{W^{1,p}(\widehat{T})} 
&\leq C^{Ish}(d,p)  |\det({A}_{\widetilde{T}})|^{-\frac{1}{p}} \sum_{i=1}^d h_i \left\| \frac{\partial \varphi}{\partial \bm r_i} \right\|_{L^p(T)},  \label{chap8210}
\end{align}
where $ C^{Ish}(d,p)$ depends only on the exponent $d$ and the exponent $p$, and is independent of the physical element $T$, the mesh size $h$, and the semi-regular parameter.

Therefore, combining \eqref{chap828}, \eqref{chap829} and \eqref{chap8210} proves \eqref{chap822} with
\begin{align*}
\displaystyle
C^{L^2}(d,p,q) := 2 C^{Sob}(d,p,q) \cdot C^{PS}(d,p) \cdot C^{Ish}(d,p) \cdot (d !)^{\frac{1}{q} - \frac{1}{p}},
\end{align*}
which depends only on $d,p,q$ and is independent of the physical element $T$,
the mesh size $h$, and the semi-regular parameter. Here, we used
\begin{align*}
\displaystyle
| \det ({A}_{\widetilde{T}}) | = | \det(\widetilde{{A}}) | | \det (\widehat{{A}}) | = \frac{|T|_d}{|\widetilde{T}|_d} \frac{|\widetilde{T}|_d}{|\widehat{T}|_d} = d ! |T|_d.
\end{align*}
\end{pf*}

\color{black}
\section{Semi-regular geometric mesh condition}

\subsection{New geometric mesh condition}
We proposed a new geometric parameter $H_T$ in \cite{IshKobTsu21a,IshKobTsu23}. This parameter represents the flatness of a simplex.

\begin{defi} \label{defi1}
 {We define the parameter $H_{{T}}$ as}
{
\begin{align*}
\displaystyle
H_{{T}} := \frac{\prod_{i=1}^d h_i}{|{T}|_d} h_{{T}}.
\end{align*}
}
\end{defi}

The following geometric condition is equivalent to the maximum angle condition (\cite{IshKobSuzTsu21}).

\begin{assume} \label{neogeo=assume}
A family of meshes $\{ \mathbb{T}_h\}$ has a semi-regular property if there exists $\gamma_0 \> 0$ such that
\begin{align}
\displaystyle
\frac{H_{T}}{h_{T}} \leq \gamma_0 \quad \forall \mathbb{T}_h \in \{ \mathbb{T}_h \}, \quad \forall T \in \mathbb{T}_h. \label{NewGeo}
\end{align}
\end{assume}

The quantity $H_T/h_T$ can be easily calculated in the numerical process of finite element methods. Therefore, the new condition may be useful in the case of adaptive finite element methods. We expect the new mesh condition to become an alternative to the maximum-angle condition.

\subsection{RT finite element interpolation operator} \label{RTsp}
For $T \in \mathbb{T}_h$, the local RT polynomial space is defined as
\begin{align}
\displaystyle
\mathbb{RT}^0(T) := \mathbb{P}^0(T)^d + {\bm x} \mathbb{P}^0(T), \quad {\bm x} \in \mathbb{R}^d. \label{RTsp}
\end{align}
Let ${I}_{T}^{RT}: W^{1,1}(T)^d \to \mathbb{RT}^0(T)$ be the RT interpolation operator such that for any ${\bm v} \in W^{1,1}(T)^d$,
\begin{align}
\displaystyle
{I}_{T}^{RT}: W^{1,1}(T)^d \ni {\bm v} \mapsto {I}_{T}^{RT} {\bm v} := \sum_{i=1}^{d+1} \left(  \int_{{F}_{T,i}} {\bm v} \cdot {\bm n}_{T,i} d{s} \right) {\bm \theta}_{T,i}^{RT} \in \mathbb{RT}^0(T), \label{RTint}
\end{align}
where ${\bm \theta}_{T,i}^{RT}$ is the local shape {basis} function (e.g. \cite[p. 162]{ErnGue21a}) and ${\bm n}_{T,i}$ is a fixed unit normal to ${F}_{T,i}$. 

The RT finite-element space is defined as follows:
\begin{align*}
\displaystyle
V^{RT}_{h} &:= \{ {\bm v_h} \in L^1(\Omega)^d: \  {\bm v_h}|_T \in \mathbb{RT}^0(T), \ \forall T \in \mathbb{T}_h, \  [\![ {\bm v_h} \cdot {\bm n} ]\!]_F = 0, \ \forall F \in \mathcal{F}_h^i \}.
\end{align*}
We define the following global RT interpolation ${I}_{h}^{RT} : W^{1,1}(\Omega)^d \to V^{RT}_{h}$ as
\begin{align*}
\displaystyle
({I}_{h}^{RT} {\bm v} )|_{T} := {I}_{T}^{RT} ({\bm v}|_{T}) \quad \forall T \in \mathbb{T}_h, \quad \forall {\bm v} \in W^{1,1}(\Omega)^d.
\end{align*}
The following two lemmata are divided into the element on $\mathfrak{T}^{(2)}$ or $\mathfrak{T}_1^{(3)}$ and the element on $\mathfrak{T}_2^{(3)}$ in Section \ref{element=cond}. 

\begin{lem} \label{lem1151}
Let $p \in [1,\infty)$. Let $T\in \mathbb{T}_h$ satisfy Condition \ref{cond1} or Condition \ref{cond2} with $T ={\Phi}_{T} (\widetilde{T})$ and $\widetilde{T} = {\Phi}_{\widetilde{T}}(\widehat{T})$, where $\widetilde{T} \in \mathfrak{T}^{(2)}$ or $\widetilde{T} \in \mathfrak{T}_1^{(3)}$. Then, for any $\bm \hat{v} \in W^{1,p}(\widehat{T})^d$ with $\bm \tilde{v}= {\Psi}_{\widehat{T}} \bm \hat{v}$ and $\bm {v} = {\Psi}_{\widetilde{T}} \bm \tilde{v}$,  
\begin{align}
\displaystyle
\| {I_{T}^{RT} \bm v} \|_{L^p({T})^d} 
&\leq {C^{LRTsta1}(d,p)} \left[ \frac{H_{T}}{h_{T}} \left( \| \bm v \|_{L^p(T)^d} + \sum_{|\varepsilon|=1} h^{\varepsilon} \left \| \partial_{\bm r}^{\varepsilon} \bm v \right \|_{L^p(T)^d} \right ) + h_{T} \| \bm \nabla \cdot \bm {v} \|_{L^p({T})} \right]. \label{RT51}
\end{align}
{Here, the constant $C^{LRTsta1}(d,p)$ depends only on $d$ and $p$ and is independent of $h$ and of the aspect ratios and interior angles of the simplices.}
\end{lem}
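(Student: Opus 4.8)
The plan is to build everything on the one structural property that distinguishes the Raviart--Thomas interpolation from a generic vector interpolant: its commutation with the Piola transform, $I_T^{RT}(\Psi\hat{\bm v}) = \Psi(I_{\widehat T}^{RT}\hat{\bm v})$. Hence $I_T^{RT}\bm v = \Psi(I_{\widehat T}^{RT}\hat{\bm v})$, and the whole estimate can be pulled back to the fixed reference simplex $\widehat T$. Since $A_T\in O(d)$ has $\|A_T\|_2 = \|A_T^{-1}\|_2 = 1$ and $|\det A_T| = 1$ by \eqref{CN331c}, the second Piola map $\Psi_{\widetilde T}$ acts as an $L^p$-isometry, so only the anisotropic step $\Psi_{\widehat T}$, carrying $A_{\widetilde T} = \widetilde A\widehat A$ (with $\widetilde A$ or $\widetilde A_1$, since $\widetilde T\in\mathfrak{T}^{(2)}$ or $\mathfrak{T}_1^{(3)}$), contributes geometry.

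On $\widehat T$ I would split $\hat{\bm w} := I_{\widehat T}^{RT}\hat{\bm v}\in\mathbb{RT}^0(\widehat T)$ by its divergence, writing $\hat{\bm w} = \hat{\bm a} + \tfrac1d(\widehat\nabla\cdot\hat{\bm w})\,\hat{\bm x}$ with $\hat{\bm a}$ constant. Because the RT interpolation commutes with the divergence, $\widehat\nabla\cdot\hat{\bm w} = \Pi_{\widehat T}^0(\widehat\nabla\cdot\hat{\bm v})$, so by the projection stability \eqref{chapp821} the radial coefficient is bounded by $\|\widehat\nabla\cdot\hat{\bm v}\|_{L^p(\widehat T)}$. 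The constant part $\hat{\bm a}$ is fixed by the degrees of freedom $\int_{\widehat F_i}\hat{\bm v}\cdot\hat{\bm n}_i\,d\hat s$, which the trace inequality (Lemma \ref{lem=trace}) and norm equivalence on the finite-dimensional space $\mathbb{RT}^0(\widehat T)$ control by $\|\hat{\bm v}\|_{L^p(\widehat T)} + |\hat{\bm v}|_{W^{1,p}(\widehat T)}$.

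Next I would push each part forward by $\Psi$. For the radial part I use $A\hat{\bm x} = \bm x - \bm b_T$ together with the Piola divergence-preservation $\widehat\nabla\cdot\hat{\bm v} = \det(A)\,\nabla\cdot\bm v$; combined with $\|A_{\widetilde T}\|_2\le c\,h_T$ from \eqref{CN331a} and \eqref{CN331b}, the determinant factors cancel and this contribution collapses exactly to $c\,h_T\|\nabla\cdot\bm v\|_{L^p(T)}$, inheriting no condition number. For the constant part and the remaining $L^p$ and seminorm quantities I would \emph{not} use crude operator-norm estimates; instead I would transform back with the refined two-step scaling of \cite[Lemma 6]{IshKobTsu23}, which tracks each reference derivative by its own weight $h^\varepsilon\partial_{\bm r}^\varepsilon$, using $\|\widetilde A\|_2\|\widetilde A^{-1}\|_2 = H_T/h_T$ and $\|A_T\|_2 = 1$ from Lemma \ref{lem351}. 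This yields the factor $\tfrac{H_T}{h_T}$ on $\|\bm v\|_{L^p(T)} + \sum_{|\varepsilon|=1} h^\varepsilon\|\partial_{\bm r}^\varepsilon\bm v\|_{L^p(T)}$, and summing the three contributions gives \eqref{RT51}.

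The hard part will be the anisotropy-faithful scaling of the non-divergence part. A direct estimate through $\|A_{\widetilde T}\|_2$ and $|\hat{\bm v}|_{W^{1,p}(\widehat T)}$ would insert the aspect-ratio factor $\cond(\widehat A) = \max_i h_i/\min_i h_i$, which is unbounded on anisotropic meshes; the purpose of the two-step decomposition and the direction-wise weights $h^\varepsilon\partial_{\bm r}^\varepsilon$ is precisely to replace this by the benign quantity $H_T/h_T$, which stays controlled under the semi-regular condition \eqref{NewGeo}. Keeping the divergence contribution cleanly isolated, so that it picks up only $h_T$ and never $H_T/h_T$, is the delicate bookkeeping that the Piola divergence-preservation makes possible, and it is exactly why the RT interpolant is the right object here.
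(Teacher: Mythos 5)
The paper itself gives no proof of Lemma \ref{lem1151}: it only cites \cite[Lemma 8]{Ish22}, so the comparison is against that reference's method. Your skeleton --- Piola commutation $I_T^{RT}\Psi=\Psi I_{\widehat{T}}^{RT}$, the splitting $I_{\widehat{T}}^{RT}\hat{\bm v}=\hat{\bm a}+\tfrac{1}{d}\bigl(\widehat{\nabla}\cdot I_{\widehat{T}}^{RT}\hat{\bm v}\bigr)\hat{\bm x}$, divergence commutation for the radial part (which indeed yields the clean $h_T\|\nabla\cdot\bm v\|_{L^p(T)}$ term), and direction-weighted scaling for the rest --- is the right architecture and is consistent with the cited machinery. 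But there is one genuine gap, sitting exactly at the point you yourself call ``the hard part.'' You bound the constant part by the aggregate estimate $|\hat{\bm a}|\le c\bigl(\|\hat{\bm v}\|_{L^p(\widehat{T})^d}+|\hat{\bm v}|_{W^{1,p}(\widehat{T})^d}\bigr)$ and then assert that the scaling of \cite[Lemma 6]{IshKobTsu23} converts this into $\tfrac{H_T}{h_T}\bigl(\|\bm v\|_{L^p(T)^d}+\sum_{|\varepsilon|=1}h^{\varepsilon}\|\partial_{\bm r}^{\varepsilon}\bm v\|_{L^p(T)^d}\bigr)$. That conversion does not follow. Pushing the constant vector forward costs, component-wise, a factor $h_j$ on $\hat a^{(j)}$ (through $A_{\widetilde{T}}=\widetilde{A}\widehat{A}$), while pulling the reference norm of the $k$-th Piola component back to $T$ costs $h_k^{-1}$, since $\hat v^{(k)}=\det(A)\,h_k^{-1}\,[\widetilde{A}^{-1}A_T^{-1}\bm v]_k$. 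If $\hat a^{(j)}$ is only known to be bounded by norms of \emph{all} components, the cross terms carry $h_j/h_k$ --- precisely the aspect ratio you must avoid --- and \cite[Lemma 6]{IshKobTsu23} cannot repair this: it is a scaling lemma for scalar pullbacks $\hat\varphi=\varphi\circ\Phi$, not for Piola-transformed components.

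What closes the gap is component decoupling of $\hat{\bm a}$, and this is exactly where the hypothesis $\widetilde{T}\in\mathfrak{T}^{(2)}$ or $\widetilde{T}\in\mathfrak{T}_1^{(3)}$ --- which your argument never invokes --- must enter. For $\widehat{T}$ in 2D and for $\widehat{T}_1$ (regular vertex property), the face $\widehat{F}_j$ opposite $\hat{\bm p}_{j+1}$ lies in the hyperplane $\{\hat x_j=0\}$ with normal $-\bm e_j$; since the $j$-th component of $I_{\widehat{T}}^{RT}\hat{\bm v}$ is $\hat a^{(j)}+\hat b\,\hat x_j$, the flux degree of freedom on $\widehat{F}_j$ equals $-\hat a^{(j)}|\widehat{F}_j|_{d-1}$, whence $\hat a^{(j)}=|\widehat{F}_j|_{d-1}^{-1}\int_{\widehat{F}_j}\hat v^{(j)}\,d\hat s$ is controlled by the $j$-th component \emph{alone} (then the trace inequality on the fixed reference element finishes it). With this decoupled bound, $h_j|\hat a^{(j)}|$ picks up only $h_j h_j^{-1}=1$ times $|\det A_{\widetilde{T}}|^{1-1/p}\|\widetilde{A}^{-1}\|_2(\cdots)$, and the product $\|\widetilde{A}\|_2\|\widetilde{A}^{-1}\|_2\le c\,H_T/h_T$ from \eqref{CN331b} delivers \eqref{RT51}. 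A useful sanity check that something is missing in your version: as written, it never distinguishes $\widehat{T}_1$ from $\widehat{T}_2$, so it would ``prove'' \eqref{RT51} also for $\widetilde{T}\in\mathfrak{T}_2^{(3)}$, contradicting the fact that the paper only obtains the weaker form \eqref{RT58} in Lemma \ref{lem1152} --- precisely because $\widehat{T}_2$ has a face with mixed normal proportional to $(1,-1,0)^{\top}$ that destroys the decoupling.
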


\begin{pf*}
A proof can be found in \cite[Lemma 8]{Ish22}.
\end{pf*}

\begin{lem} \label{lem1152}
Let $p \in [1,\infty)$ and $d=3$. Let $T\in \mathbb{T}_h$ satisfy Condition \ref{cond2} with $T ={\Phi}_{T} (\widetilde{T})$ and $\widetilde{T} = {\Phi}_{\widetilde{T}}(\widehat{T})$, where $\widetilde{T} \in \mathfrak{T}_2^{(3)}$. Then,  for any $\bm \hat{v} \in W^{1,p}(\widehat{T})^3$ with $\bm \tilde{v}= {\Psi}_{\widehat{T}} \bm \hat{v}$ and $\bm {v} = {\Psi}_{\widetilde{T}} \bm \tilde{v}$,  
\begin{align}
\displaystyle
\| {I_{T}^{RT} \bm v} \|_{L^p({T})^3} 
&\leq {C^{LRTsta2}(d,p)} \frac{H_{T}}{h_{T}} \left[  \| \bm v \|_{L^p(T)^3}  + h_T \sum_{k=1}^3 \left \| \frac{\partial \bm v}{\partial \bm{r}_k} \right \|_{L^p(T)^3} \right]. \label{RT58}
\end{align}
{Here, the constant $C^{LRTsta2}(d,p)$ depends only on $d$ and $p$ and is independent of $h$ and of the aspect ratios and interior angles of the simplices.}
\end{lem}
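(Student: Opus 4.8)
The plan is to mirror the two-step Piola/affine argument behind Lemma~\ref{lem1151}, adapted to the second reference tetrahedron $\widehat{T}_2$ and its shear matrix $\widetilde{A}_2$. The starting point is the commutation of the RT interpolation with the contravariant Piola transform: since $\Psi_{\widehat{T}}$ and $\Psi_{\widetilde{T}}$ preserve the normal-flux functionals that define $I^{RT}$, one has $I_{\widetilde{T}}^{RT}\tilde{\bm v} = \Psi_{\widehat{T}}(I_{\widehat{T}}^{RT}\hat{\bm v})$ and $I_T^{RT}\bm v = \Psi_{\widetilde{T}}(I_{\widetilde{T}}^{RT}\tilde{\bm v})$, so that $I_T^{RT}\bm v = \Psi(I_{\widehat{T}}^{RT}\hat{\bm v})$. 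Because $A_T \in O(d)$ is orthogonal, $\Psi_{\widetilde{T}}$ is an $L^p$-isometry (Lemma~\ref{lem351}, \eqref{CN331c}), so the entire geometric content is carried by the first step $\Psi_{\widehat{T}}$, whose matrix factors as $A_{\widetilde{T}} = \widetilde{A}_2\widehat{A}$ with $\widehat{A} = \diag(h_1,h_2,h_3)$.

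First I would record the reference stability estimate. On the fixed simplex $\widehat{T}_2$ the face functionals in \eqref{RTint} are continuous on $W^{1,1}(\widehat{T})^3$ by the trace inequality (Lemma~\ref{lem=trace}), and $\mathbb{RT}^0(\widehat{T})$ is finite dimensional, so norm equivalence yields $\|I_{\widehat{T}}^{RT}\hat{\bm v}\|_{L^p(\widehat{T})^3} \le c\|\hat{\bm v}\|_{W^{1,p}(\widehat{T})^3}$ with $c$ depending only on $\widehat{T}_2$ and $p$. Combining this with the $L^p$ scaling of the Piola transform, $\|\Psi\hat{\bm w}\|_{L^p(T)^3} \le |\det A|^{1/p-1}\|A\|_2\|\hat{\bm w}\|_{L^p(\widehat{T})^3}$, and $|\det A| = |\det A_{\widetilde{T}}| = 6|T|_3$, reduces the claim to bounding $|\det A_{\widetilde{T}}|^{1/p-1}\|A_{\widetilde{T}}\|_2\,\|\hat{\bm v}\|_{W^{1,p}(\widehat{T})^3}$ by the right-hand side of \eqref{RT58}.

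The decisive step is the transport of $\|\hat{\bm v}\|_{W^{1,p}(\widehat{T})^3}$ back to $T$ by the anisotropic scaling of \cite[Lemma~6]{IshKobTsu23}. A crude use of the operator norm of $A_{\widetilde{T}}$ is fatal here: it would contribute $\|\widehat{A}\|_2\|\widehat{A}^{-1}\|_2 = \max_i h_i/\min_i h_i$, reintroducing the aspect ratio we wish to eliminate. Instead the diagonal factor $\widehat{A} = \diag(h_i)$ must be absorbed component-by-component into the directional weights $h_i\|\partial \bm v/\partial \bm r_i\|_{L^p(T)^3}$, so that only the conditioning of the shear survives. Invoking Lemma~\ref{lem351}, namely $\|\widetilde{A}_2\|_2 \le 2$ and $\|\widetilde{A}_2\|_2\|\widetilde{A}_2^{-1}\|_2 \le \tfrac{2}{3}\tfrac{H_T}{h_T}$, the accumulated factors collapse to the single geometric quantity $H_T/h_T$, giving \eqref{RT58}.

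The main obstacle is precisely this bookkeeping for the type-2 configuration. Unlike the type-1 case of Lemma~\ref{lem1151}, where the favourable interplay between $\widetilde{A}_1$ and the divergence scaling $\bm\nabla\cdot(\Psi\hat{\bm w}) = \det(A)^{-1}\,\widehat{\bm\nabla}\cdot\hat{\bm w}$ allows the term $h_T\|\bm\nabla\cdot\bm v\|$ to be split off with its own coefficient, the off-diagonal $-s_1$ entry of $\widetilde{A}_2$ and the vertex placement of $\widehat{T}_2$ couple the components and block that refinement. I would therefore not isolate the divergence: at the reference level I bound every first-order quantity, including $\widehat{\bm\nabla}\cdot\hat{\bm v}$, uniformly by $\|\hat{\bm v}\|_{W^{1,p}(\widehat{T})^3}$, which is why the coarser common factor $H_T/h_T$ multiplies the whole bracket $[\,\|\bm v\| + h_T\sum_{k}\|\partial\bm v/\partial\bm r_k\|\,]$ rather than only its non-divergence part. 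The point to verify carefully is that, under the constraints $h_2 s_1 \le h_1/2$ and $h_3 s_{21} \le h_1/2$ built into $\widetilde{A}_2$, the cancellation of the $\widehat{A}$-conditioning is exact and leaves no residual dependence on angles or aspect ratios.
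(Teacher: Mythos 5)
The paper gives no internal proof of Lemma \ref{lem1152} (it is quoted from \cite[Lemma 8]{Ish22}), so your sketch must stand on its own merits, and it contains a genuine gap: the reduction you propose cannot yield the constant $c\,H_T/h_T$ in \eqref{RT58}. Your chain is (i) commutation $I_T^{RT}\bm v=\Psi(I_{\widehat{T}}^{RT}\hat{\bm v})$ (correct), (ii) reference stability $\|I_{\widehat{T}}^{RT}\hat{\bm v}\|_{L^p(\widehat{T})^3}\le c\|\hat{\bm v}\|_{W^{1,p}(\widehat{T})^3}$ (correct), and (iii) the operator-norm Piola scaling $\|\Psi\hat{\bm w}\|_{L^p(T)^3}\le|\det A|^{\frac1p-1}\|A\|_2\|\hat{\bm w}\|_{L^p(\widehat{T})^3}$ followed by transporting $\|\hat{\bm v}\|_{W^{1,p}(\widehat{T})^3}$ back to $T$. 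Step (iii) is fatal: once you have replaced the diagonal action of $\widehat{A}=\diag(h_1,h_2,h_3)$ by its operator norm, the componentwise absorption you invoke later is no longer available, and what survives is the condition number $\|A\|_2\|A^{-1}\|_2\ge\|\widehat{A}\|_2\|\widehat{A}^{-1}\|_2=\max_i h_i/\min_i h_i$ (see \eqref{CN331a}), i.e.\ the aspect ratio, which Assumption \ref{neogeo=assume} does \emph{not} control (semi-regularity bounds $H_T/h_T=6/(t_1t_2)$, i.e.\ the shear only). A concrete counterexample: take $\widetilde{T}\in\mathfrak{T}_2^{(3)}$ with $h_1=1$, $h_2=h_3=\epsilon$, $s_1\approx0$, $t_1=t_2=1$, and $\hat{\bm v}:=(0,1,0)^{\top}$. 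Then $H_T/h_T=6$, $\bm v=\Psi\hat{\bm v}\approx(0,\epsilon^{-1},0)^{\top}$ is constant, both $\|I_T^{RT}\bm v\|_{L^p(T)^3}$ and the right-hand side of \eqref{RT58} are of order $\epsilon^{2/p-1}$, but your intermediate quantity $|\det A_{\widetilde{T}}|^{\frac1p-1}\|A_{\widetilde{T}}\|_2\|\hat{\bm v}\|_{W^{1,p}(\widehat{T})^3}$ is of order $\epsilon^{2/p-2}$: your bound overshoots by exactly the aspect ratio $1/\epsilon$, so the claimed ``exact cancellation of the $\widehat{A}$-conditioning'' is false along this route.

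The missing idea is a component-resolved estimate of the RT degrees of freedom on $\widehat{T}_2$ \emph{before} any operator norms are taken, and it is incompatible with your stated plan of bounding every first-order quantity uniformly by $\|\hat{\bm v}\|_{W^{1,p}(\widehat{T})^3}$. Writing $I_{\widehat{T}}^{RT}\hat{\bm v}=\bm a+b\hat{\bm x}$, the two coordinate faces of $\widehat{T}_2$ determine $a_2,a_3$ from $\hat{v}^{(2)},\hat{v}^{(3)}$ alone, while the faces with normals $(1,-1,0)^{\top}/\sqrt{2}$ and $(1,0,1)^{\top}/\sqrt{2}$ couple the components; the point is that this coupling enters only through \emph{differences of face averages} of $\hat{v}^{(2)}$ (resp.\ $\hat{v}^{(3)}$) over two faces that are images of each other under a coordinate shear, hence is controlled by the single matched derivative $\partial_{\hat{x}_2}\hat{v}^{(2)}$ (resp.\ $\partial_{\hat{x}_3}\hat{v}^{(3)}$) rather than by the full gradient. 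Only with this structure do the factors $h_j$ (forward Piola, component $j$) and $h_k^{-1}$ (backward Piola, component $k$) either cancel ($j=k$) or remain bounded via $h_2\le h_1$ and $h_3\le 2h_1$ from Condition \ref{cond2}, leaving precisely $\|\widetilde{A}_2\|_2\|\widetilde{A}_2^{-1}\|_2\le\frac23 H_T/h_T$ from \eqref{CN331b} and the uniform weight $h_T$ that appears in \eqref{RT58}; the same bookkeeping with the divergence trick is what produces the sharper anisotropic weights $h_i$ in Lemma \ref{lem1151}. Your proposal correctly identifies that the crude operator-norm route is dangerous and that type (\roman{stwo}) forbids isolating the divergence, but the remedy you adopt (discarding the component structure at the reference level) is exactly what reinstates the aspect ratio you set out to avoid.
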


\begin{pf*}
A proof can be found in \cite[Lemma 8]{Ish22}.
\end{pf*}

Lemmata \ref{lem1151} and \ref{lem1152} yield the following corollary.

\begin{cor}[Stability] \label{RT=sta}
Let $p \in [1,\infty)$.  {We impose Assumption \ref{neogeo=assume}.} Then,
\begin{align*}
\displaystyle
\| {I_{h}^{RT} \bm v} \|_{L^p({\Omega})^d}
&\leq {C^{RTsta}(d,p,\gamma_0)} \| \bm v\|_{W^{1,p}(\Omega)^d} \quad \forall \bm v \in W^{1,p}(\Omega)^d. 
\end{align*}
{Here, the constant $C^{RTsta}(d,p,\gamma_0)$ depends only on $d$, $p$ and the semi-regular mesh parameter $\gamma_0$, and is independent of $h$ and of the aspect ratios and interior angles of the simplices.}
\end{cor}

The following two theorems are based on an element $T$ satisfying  Type \roman{sone} or Type \roman{stwo} in Section \ref{element=cond} when $d=3$.

\begin{thr} \label{thr3}
Let $p \in [1,\infty)$. Let $T$ with $T ={\Phi}_{T} (\widetilde{T})$ and $\widetilde{T} = {\Phi}_{\widetilde{T}}(\widehat{T})$ be an element with Conditions \ref{cond1} or \ref{cond2} satisfying (Type \roman{sone}) in Section \ref{element=cond} when $d=3$. Let $\{ {T} , \mathbb{RT}^0({T}) , {\Sigma} \}$ be the RT finite element and $I_{T}^{RT}$ the local interpolation operator defined in \eqref{RTint}.Then, for any $\hat{\bm v} \in W^{1,p}(\widehat{T})^d$ with $\tilde{\bm v}= {\Psi}_{\widehat{T}} \hat{\bm v}$ and ${\bm v} = {\Psi}_{\widetilde{T}} \tilde{\bm v}$,  
\begin{align}
\displaystyle
\| {I}_{T}^{RT} \bm v - \bm v \|_{L^p(T)^d} 
&\leq  {C^{LRTint1}(d,p)} \left( \frac{H_{T}}{h_{T}} \sum_{i=1}^d h_i \left \|  \frac{\partial \bm v}{\partial r_i} \right \|_{L^p(T)^d} +  h_{T} \| \div \bm{v} \|_{L^{p}({T})} \right). \label{RT5}
\end{align}
{Here, the constant $C^{LRTint1}(d,p)$ depends only on $d$ and $p$ and is independent of $h$ and of the aspect ratios and interior angles of the simplices.}
\end{thr}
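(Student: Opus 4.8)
The plan is to combine three ingredients already available in the excerpt: the polynomial invariance of the Raviart--Thomas interpolation, the anisotropic stability bound of Lemma~\ref{lem1151}, and the projection error estimate of Theorem~\ref{thr822}. Since $\mathbb{P}^0(T)^d \subset \mathbb{RT}^0(T)$ by \eqref{RTsp} and $I_T^{RT}$ reproduces every field of $\mathbb{RT}^0(T)$, we have $I_T^{RT}\bm w = \bm w$ for every constant vector $\bm w \in \mathbb{R}^d$. Choosing $\bm w := \Pi_T^0 \bm v$, the $L^2$-orthogonal projection applied componentwise (hence a constant vector), I would decompose
\begin{align*}
\displaystyle
I_T^{RT}\bm v - \bm v = I_T^{RT}(\bm v - \bm w) - (\bm v - \bm w),
\end{align*}
so that the triangle inequality gives
\begin{align*}
\displaystyle
\| I_T^{RT}\bm v - \bm v \|_{L^p(T)^d} \leq \| I_T^{RT}(\bm v - \bm w) \|_{L^p(T)^d} + \| \bm v - \bm w \|_{L^p(T)^d}.
\end{align*}

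Because $T$ satisfies Condition~\ref{cond1} or Condition~\ref{cond2} with (Type \roman{sone}) when $d=3$, the associated $\widetilde T$ lies in $\mathfrak{T}^{(2)}$ or $\mathfrak{T}_1^{(3)}$, so Lemma~\ref{lem1151} applies to $\bm v - \bm w$. Since $\bm w$ is constant, $\partial_{\bm r}^{\varepsilon}\bm w = 0$ for $|\varepsilon|=1$ and $\div \bm w = 0$; thus \eqref{RT51} collapses to
\begin{align*}
\displaystyle
\| I_T^{RT}(\bm v - \bm w) \|_{L^p(T)^d} \leq c \left[ \frac{H_T}{h_T}\left( \| \bm v - \bm w \|_{L^p(T)^d} + \sum_{i=1}^d h_i \left\| \frac{\partial \bm v}{\partial \bm r_i} \right\|_{L^p(T)^d} \right) + h_T \| \div \bm v \|_{L^p(T)} \right],
\end{align*}
where I used $\sum_{|\varepsilon|=1} h^{\varepsilon}\|\partial_{\bm r}^{\varepsilon}\bm v\|_{L^p(T)^d} = \sum_{i=1}^d h_i\|\partial \bm v/\partial \bm r_i\|_{L^p(T)^d}$.

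It remains to control $\| \bm v - \bm w \|_{L^p(T)^d} = \| \bm v - \Pi_T^0 \bm v \|_{L^p(T)^d}$. Applying Theorem~\ref{thr822} componentwise with $q=p$, for which the embedding \eqref{Sobolev511} holds trivially and the prefactor satisfies $|T|_d^{1/q-1/p}=1$, yields
\begin{align*}
\displaystyle
\| \bm v - \Pi_T^0 \bm v \|_{L^p(T)^d} \leq c \sum_{i=1}^d h_i \left\| \frac{\partial \bm v}{\partial \bm r_i} \right\|_{L^p(T)^d}.
\end{align*}
Substituting this both inside the stability bound and into the leftover triangle-inequality term, and using $H_T/h_T \geq 1$ (which follows from \eqref{CN331b}, since the condition number $\|\widetilde A\|_2\|\widetilde A^{-1}\|_2$ is always at least $1$) to absorb the isolated projection error into the $H_T/h_T$ factor, I recover \eqref{RT5}.

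The substantive anisotropic work is already encapsulated in Lemma~\ref{lem1151}, so the present argument is essentially a Bramble--Hilbert/Poincar\'e bookkeeping; the main point requiring care is to keep the divergence term multiplied only by $h_T$ rather than by $H_T/h_T$. This is automatic precisely because $\div$ annihilates the constant $\bm w$, so no projection correction ever enters the divergence term. The other delicate step is confirming that the stray $\|\bm v - \bm w\|$ term produced by the triangle inequality can be folded into the anisotropic derivative sum, which is exactly where the elementary bound $H_T/h_T \geq 1$ is used. Finally, the (Type \roman{stwo}) case in $d=3$ lies outside this statement, as it requires the sharper stability estimate \eqref{RT58} of Lemma~\ref{lem1152} and is handled separately.
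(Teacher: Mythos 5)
Your proposal is correct, and it differs from the paper in an interesting way: the paper does not prove Theorem \ref{thr3} internally at all, but simply cites \cite[Theorem 2]{Ish22}, whereas you assemble a self-contained proof from statements actually available in the paper. Your mechanism is the classical stability-plus-polynomial-reproduction argument: linearity of $I_T^{RT}$ and the invariance $I_T^{RT}\bm w = \bm w$ for constant $\bm w$ (which follows from unisolvence of the face-flux degrees of freedom on $\mathbb{RT}^0(T) \supset \mathbb{P}^0(T)^d$, i.e.\ $\int_{F_{T,j}}\bm\theta^{RT}_{T,i}\cdot\bm n_{T,j}\,ds = \delta_{ij}$, a fact the paper never states explicitly and which you should justify in a write-up), followed by the anisotropic stability bound \eqref{RT51} applied to $\bm v - \bm w$ and the projection estimate \eqref{chap822} with $q=p$. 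Every step checks out: the embedding hypothesis \eqref{Sobolev511} is trivial for $q=p$ and the prefactor $|T|_d^{1/q-1/p}$ equals $1$; constants are annihilated by $\partial_{\bm r}^{\varepsilon}$ with $|\varepsilon|=1$ and by $\div$, so the divergence contribution keeps only the weight $h_T$ exactly as \eqref{RT5} requires; and the absorption of the leftover term $\|\bm v - \Pi_T^0\bm v\|_{L^p(T)^d}$ is legitimate because \eqref{CN331b} gives $1 \leq \|\widetilde A\|_2\|\widetilde A^{-1}\|_2 \leq \frac{H_T}{h_T}$ for $d=2$ (and $\leq \frac{2}{3}\frac{H_T}{h_T}$ for $d=3$), hence $H_T/h_T \geq 1$ in both cases. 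Two minor bookkeeping points: Theorem \ref{thr822} is scalar, so the vector estimate needs a componentwise application plus a harmless $d$-dependent norm-equivalence constant, and one should note that the two-step Piola map is a bijection of $W^{1,p}(\widehat T)^d$ onto $W^{1,p}(T)^d$, so Lemma \ref{lem1151} indeed applies to $\bm v - \bm w$. What your route buys is independence from the external reference, using only Lemma \ref{lem1151}, Theorem \ref{thr822}, and \eqref{CN331b}; what the citation buys the paper is brevity, and the cited proof (whose Lemma 8 is precisely the stability estimate reproduced here as Lemma \ref{lem1151}) very likely follows the same skeleton, so your argument can be regarded as making the paper's implicit reasoning explicit.
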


\begin{pf*}
A proof can be found in \cite[Theorem 2]{Ish22}.
\end{pf*}

\begin{thr} \label{thr4}
Let $p \in [1,\infty)$ and $d=3$.  Let $T$ with $T ={\Phi}_{T} (\widetilde{T})$ and $\widetilde{T} = {\Phi}_{\widetilde{T}}(\widehat{T})$ be an element with Condition \ref{cond2} satisfying (Type \roman{stwo}) in Section \ref{element=cond}.Let $\{ {T} , \mathbb{RT}^0({T}) , {\Sigma} \}$ be the RT finite element and $I_{T}^{RT}$ the local interpolation operator defined in \eqref{RTint}. Then, for any $\hat{\bm v} \in W^{1,p}(\widehat{T})^3$ with $\tilde{\bm v}= {\Psi}_{\widehat{T}} \hat{\bm v}$ and ${\bm v} = {\Psi}_{\widetilde{T}} \tilde{\bm v}$, 
\begin{align}
\displaystyle
&\| {I}_{T}^{RT} \bm v - \bm v \|_{L^p(T)^3} 
\leq {C^{LRTint2}(d,p)} \frac{H_{T}}{h_{T}} \Biggl(  h_{T} |\bm v|_{W^{1,p}(T)^3} \Biggr). \label{RT6}
\end{align}
{Here, the constant $C^{LRTint2}(d,p)$ depends only on $d$ and $p$ and is independent of $h$ and of the aspect ratios and interior angles of the simplices.}
\end{thr}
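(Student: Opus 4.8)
The plan is to transport the problem to the fixed reference simplex $\widehat{T}_2$ through the two-step Piola transform $\Psi = \Psi_{\widetilde{T}}\circ\Psi_{\widehat{T}}$, run a Bramble--Hilbert argument there, and then scale back while tracking the anisotropy through the factorisation $A=A_T A_{\widetilde{T}}$. First I would exploit the commuting property of the Raviart--Thomas interpolation under the Piola transform: because $\Psi$ maps $\{\widehat{T},\mathbb{RT}^0(\widehat{T}),\cdot\}$ onto $\{T,\mathbb{RT}^0(T),\Sigma\}$ and preserves the face functionals $\int_{F}\bm v\cdot\bm n\,ds$, one has $I_T^{RT}\Psi\hat{\bm v}=\Psi I_{\widehat{T}}^{RT}\hat{\bm v}$, so that
\begin{align*}
I_T^{RT}\bm v - \bm v = \Psi\left(I_{\widehat{T}}^{RT}\hat{\bm v}-\hat{\bm v}\right).
\end{align*}
This reduces everything to the reference error $I_{\widehat{T}}^{RT}\hat{\bm v}-\hat{\bm v}$ in $L^p(\widehat{T})^3$ together with the scaling factors generated by $\Psi$.

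On $\widehat{T}$ the operator $I_{\widehat{T}}^{RT}$ is a bounded projection onto $\mathbb{RT}^0(\widehat{T})\supset\mathbb{P}^0(\widehat{T})^3$; its $W^{1,p}(\widehat{T})^3$-boundedness follows from the continuity of the face functionals via the trace inequality on the \emph{fixed} element $\widehat{T}_2$, with constants depending only on $\widehat{T}_2$. Since constant vector fields are reproduced, a Bramble--Hilbert lemma of the type \eqref{chap829} yields $\|I_{\widehat{T}}^{RT}\hat{\bm v}-\hat{\bm v}\|_{L^p(\widehat{T})^3}\le c\,|\hat{\bm v}|_{W^{1,p}(\widehat{T})^3}$.

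It remains to scale back, and here I would split the transport into its two stages. The second stage $\Psi_{\widetilde{T}}$ is governed by $A_T\in O(3)$ with $|\det A_T|=1$, so by \eqref{CN331c} it preserves both the $L^p$ norm and the $W^{1,p}$ seminorm; all anisotropy therefore sits in the first stage $A_{\widetilde{T}}=\widetilde{A}_2\widehat{A}$. Pushing $\|I_{\widehat{T}}^{RT}\hat{\bm v}-\hat{\bm v}\|_{L^p(\widehat{T})^3}$ forward to $T$ by the Piola $L^p$-scaling and pulling $|\hat{\bm v}|_{W^{1,p}(\widehat{T})^3}$ back from $T$ by the corresponding seminorm scaling, I would invoke the refined scaling argument \cite[Lemma 6]{IshKobTsu23} together with the norm bounds of Lemma \ref{lem351}, namely $\|\widehat{A}\|_2\le h_T$, $\|\widetilde{A}\|_2\le 2$, and $\|\widetilde{A}\|_2\|\widetilde{A}^{-1}\|_2\le\frac{2}{3}\frac{H_T}{h_T}$, rather than the raw condition number of $A_{\widetilde{T}}$, and absorb $|\det A_{\widetilde{T}}|=3!\,|T|_3$ from \eqref{CN332}.

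The main obstacle is exactly this last step. A naive estimate through the operator norms of $A=A_TA_{\widetilde{T}}$ produces the factor $\|A\|_2^2\|A^{-1}\|_2$, which by \eqref{CN331a} decomposes into $\|\widetilde{A}\|_2\|\widetilde{A}^{-1}\|_2$ times the aspect ratio $\|\widehat{A}\|_2\|\widehat{A}^{-1}\|_2=\max_i h_i/\min_i h_i$, and is therefore \emph{not} controlled by $H_T/h_T$ alone. The triangular structure of $\widetilde{A}_2$ and the diagonal form of $\widehat{A}$ must be used so that the aspect ratio is absorbed into the Jacobian determinant, leaving only the semi-regular factor $H_T/h_T$ and a single isotropic length $h_T$. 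Moreover, unlike the (Type \roman{sone}) element treated in Theorem \ref{thr3}, the (Type \roman{stwo}) edge frame is twisted --- here $\bm r_2=(\bm p_3-\bm p_2)/|\bm p_3-\bm p_2|$ emanates from $\bm p_2$ rather than $\bm p_1$ --- so the per-direction separation yielding the sharp sum $\sum_i h_i\|\partial\bm v/\partial\bm r_i\|_{L^p(T)^3}$ is unavailable; one is forced to dominate every directional derivative by the full seminorm and to use the uniform scale $h_T$. This is precisely why the conclusion \eqref{RT6} carries the coarser factor $\frac{H_T}{h_T}\bigl(h_T|\bm v|_{W^{1,p}(T)^3}\bigr)$ instead of a direction-resolved bound, and is consistent with the already coarse stability estimate \eqref{RT58} for $\mathfrak{T}_2^{(3)}$.
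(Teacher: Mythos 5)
Your reduction to the reference element is sound as far as it goes: the Piola transform does commute with the RT interpolation (it preserves the face flux functionals and maps $\mathbb{RT}^0(\widehat{T})$ onto $\mathbb{RT}^0(T)$), and on the fixed reference tetrahedron $\widehat{T}_2$ a Bramble--Hilbert argument of the type \eqref{chap829} gives $\| I_{\widehat{T}}^{RT}\hat{\bm v}-\hat{\bm v}\|_{L^p(\widehat{T})^3}\le c\,|\hat{\bm v}|_{W^{1,p}(\widehat{T})^3}$ with a constant depending only on $\widehat{T}_2$. The problem is that everything which makes Theorem \ref{thr4} nontrivial is concentrated in the step you leave undone. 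You correctly diagnose that estimating the push-forward and pull-back through operator norms of $A = A_T A_{\widetilde{T}}$ yields the factor $\|A\|_2^2\|A^{-1}\|_2$, which contains the aspect ratio $\max_i h_i/\min_i h_i$ and is not controlled by $H_T/h_T$; indeed, for $h_1=h_2=1$, $h_3=\epsilon$ with $t_1,t_2$ bounded below, $H_T/h_T$ stays bounded while $h_T/\min_i h_i = 1/\epsilon \to \infty$, so any proof routed through $\|A\|_2^2\|A^{-1}\|_2$ proves a strictly weaker (and for this theorem, useless) statement. But your proposed remedy --- that ``the aspect ratio is absorbed into the Jacobian determinant'' --- cannot work inside the framework you set up: the determinant factors cancel \emph{exactly}, since the $L^p$ push-forward of the error contributes $|\det A_{\widetilde{T}}|^{\frac{1}{p}-1}\|A\|_2$ while the $W^{1,p}$ pull-back of $\hat{\bm v}$ contributes $|\det A_{\widetilde{T}}|^{1-\frac{1}{p}}\|A\|_2\|A^{-1}\|_2$, so no power of the determinant survives to absorb anything.

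What actually removes the aspect ratio is a component-wise pairing that operator norms destroy: the diagonal map $\widehat{A}=\diag(h_1,h_2,h_3)$ multiplies the $j$-th component of the Piola-transformed field by $h_j$ on the way forward and by $h_j^{-1}$ on the way back, and these factors must be matched index by index, using that the columns of $\widetilde{A}_2$ are unit vectors realising (after the rotation $A_T$) the directions $\bm r_i$, and using the explicit form of the coefficients of $I_{\widehat{T}}^{RT}\hat{\bm v}$ obtained from the face functionals. This is precisely the content of the proof the paper points to, \cite[Theorem~3]{Ish22} (the paper itself gives only this citation, so the full argument is the burden of your proposal); it is also where the genuine difference between (Type \roman{sone}) and (Type \roman{stwo}) appears, since in the twisted case the mixing of components through the RT degrees of freedom forces the coarse bound $h_T|\bm v|_{W^{1,p}(T)^3}$ in \eqref{RT6} rather than the direction-resolved sum of Theorem \ref{thr3}. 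In your write-up both of these points have the status of assertions rather than proofs: you name the obstacle, propose a mechanism that cannot resolve it, and never exhibit the component-wise estimates that do. As it stands the proposal establishes only the easy reference-element estimate and therefore does not constitute a proof of \eqref{RT6}.
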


\begin{pf*}
A proof can be found in \cite[Theorem 3]{Ish22}.
\end{pf*}

\begin{cor} \label{coro=gloRT}
Let $p \in [1,\infty)$.  We impose Assumption \ref{neogeo=assume}. Then, for any $\hat{\bm v} \in W^{1,p}(\widehat{T})^d$ with $\tilde{\bm v}= {\Psi}_{\widehat{T}} \hat{\bm v}$ and ${\bm v} = {\Psi}_{\widetilde{T}} \tilde{\bm v}$, 
\begin{align}
\displaystyle
\| {I}_{T}^{RT} \bm v - \bm v \|_{L^p(T)^d} 
&\leq {C^{RTint}(d,p,\gamma_0)}  h_{T} |\bm v|_{W^{1,p}(T)^d}. \label{RTint=coro}
\end{align}
{Here, the constant $C^{{RTint}}(d,p,\gamma_0)$ depends only on $d$, $p$ and the semi-regular mesh parameter $\gamma_0$, and is independent of $h$ and of the aspect ratios and interior angles of the simplices.}
\end{cor}

\begin{rem}
In the classical shape-regular framework, the local Raviart-Thomas (RT) interpolation operator $I_T^{{RT}}$, as defined in \eqref{RTint}, can be extended by continuity from $W^{1,1}(T)^d$ to the broader space
\begin{align*}
\displaystyle
W(T) := \{ \bm {v} \in L^q(T)^d; \ \div \bm{v}\in L^2(T) \} = H(\div,T) \cap L^q(T)^d, \quad q>2,
\end{align*}
as demonstrated in \cite[Section~2.5.1]{BofBreFor13} and \cite[Section~17.2]{ErnGue21a}. Specifically, for $\bm{v}\in W(T)$, all the degrees of freedom in \eqref{RTint} are well-defined, and $I_T^{{RT}}\bm{v}\in \mathbb{RT}^0(T)$.

In this paper, we operate under the semi-regular anisotropic mesh condition and employ the local estimates \eqref{RT51} and \eqref{RT58} from \cite{Ish22}. These bounds are established within a $W^{1,p}$-framework and incorporate directional derivatives of $\bm{v}$ in addition to its divergence, along with the two-step affine mapping and anisotropic trace inequalities. Currently, we are unable to extend such anisotropic estimates to rougher data $\bm{v}\in W(T)$ (or even to $\bm{v} \in H(\div,T)$), particularly in terms of eliminating the gradient terms on the right-hand side of \eqref{RT51} while maintaining the same dependence of the constants on the semi-regularity parameter.

Consequently, throughout Section \ref{RTsp}, we retain the regularity assumption $\bm{v}\in W^{1,p}(T)^d$ in all stability and RT interpolation estimates, including  Lemmata \ref{lem1151}-\ref{lem1152}, Corollaries \ref{RT=sta}-\ref{coro=gloRT}, and Theorems \ref{thr3}-\ref{thr4}. It is also noteworthy that our discrete $L^q - L^p$ Sobolev inequalities in Section \ref{sec;proof=main} rely solely on the global stability estimate in Corollary \ref{RT=sta}.

We also refer to our previous work~\cite{Ish22} for comprehensive discussions on anisotropic RT interpolation on semi-regular meshes and for counterexamples illustrating that Babu\v{s}ka--Aziz type techniques and purely $H(\div)$-based approaches are not applicable in this context; see particularly in \cite[Section 4]{Ish22}.

Finally, we recall that our local RT interpolation estimates in Theorems \ref{thr3}-\ref{thr4} are based on the anisotropic results of \cite{Ish22}. In the Type~(i) configuration, \cite[Theorem~2]{Ish22} yields the genuinely anisotropic bound \eqref{RT5}, expressed in terms of directional derivatives and the divergence of $v$. In the Type~(ii) configuration, however, \cite[Theorem~3]{Ish22} currently provides only an estimate of the form \eqref{RT6}, involving the full $W^{1,p}(T)^3$-seminorm. We do not claim any refinement of this result here, and obtaining a sharper anisotropic estimate analogous to \eqref{RT5} for Type~(ii) elements under the same semi-regular mesh condition appears to require additional ideas and is left open.

\end{rem}

\begin{rem}[On the dependence of constants]
Throughout this paper, we denote the parameter dependence of generic constants by expressions such as $C^{LRTsta1}(d,p)$ or $C^{dS0}(d,\Omega,p,q,\gamma_0)$. Notably, these constants are independent of the mesh size $h$, as well as the element aspect ratios and interior angles. When applicable, their dependence on anisotropy is solely through the semi-regularity parameter $\gamma_0$.

In the context of anisotropic RT interpolation estimates, as exemplified in Lemma \ref{lem1151}, we document only the dependence on pertinent parameters, without explicitly tracking all intermediate constants. While it is theoretically feasible to derive a closed-form expression in terms of $d$ and $p$, such an endeavour would necessitate extensive bookkeeping through the two-step affine/Piola transformations and auxiliary inequalities, which exceeds the scope of this paper. We intend to address explicit constant tracking in future research.
\end{rem}

\color{black}

\section{Lemmata for analysis}

\begin{lem}[Bogovski\u{\i}-type lemma] \label{lem:Bogovskii}
Let $D \subset \mathbb{R}^d$, $d \in \{ 2 , 3 \}$ be a bounded, connected domain that is a finite union of open sets with Lipschitz-continuous boundaries. Then, there exists an operator $\mathcal{L}$ from $L^1_0(D)$ into $L^1(D)^d$ such that
\begin{align}
\displaystyle
\div (\mathcal{L} f) = f \quad \forall f \in L^1_0(D).
\end{align}
Furthermore, the operator $\mathcal{L}$ maps $\mathcal{D}(D) \cap L^1_0(D)$ into $\mathcal{D}(D) ^d$ and for each real $r \in (1,\infty)$, is continuous from $L^r_0(D)$ into $W^{1,r}_0(D)^d$; there exists a constant ${C^{Bog}(d,D,r)}$ such that
\begin{align}
\displaystyle
| \mathcal{L} f |_{W^{1,r}(D)^d} \leq {C^{Bog}(d,D,r)} \| f \|_{L^r(D)} \quad \forall f \in L^r_0(D), \label{ineq=4=2}
\end{align}
where $L^r_0(D) := \{ v \in L^r(D): \ \int_D v dx = 0 \}$. {Here, the constant $C^{Bog}(d,D,r)$ depends only on $r$, $d$, and the geometry of $D$, and is, in particular independent of the mesh $\mathbb{T}_h$.} {Furthermore, if $D$ is star-shaped with respect to a ball $B_{\rho} \subset D$ of radius $\rho$, then $C^{Bog}(d,D,r)$ admits an upper bound in terms of $d$, $r$, and the chunkiness parameter $\eta_D := \frac{\diam (D)}{\rho}$. For more general domains obtained by patching such subdomains (e.g. via a partition-of-unity argument), the dependence of the constant on the geometry may be less explicit.}

\end{lem}

\begin{pf*}
Existence and $W^{1,r}$ -stability of a right inverse of the divergence on Lipschitz-type domains can be found, for instance, in \cite[Theorem 1.4.5]{Beretal25}, where no explicit tracking of the continuity constant is provided. On domains that are star-shaped with respect to a ball, one may obtain bounds for the stability constant in terms of the chunkiness parameter; see, e.g., \cite[Chapter~III.3]{Gal94}. Extensions to domains representable as finite unions of such star-shaped subdomains can be achieved via a partition-of-unity argument; see, e.g., \cite[Section 5]{GuzSal21} and references therein. We omit the details. 
\end{pf*}

\color{black}
The subsequent relation is integral to our analysis of anisotropic meshes.

\begin{lem} \label{RTCRrel0}
For any $\bm \tau_h \in V_h^{RT}$ and $\psi_h \in V_{h}^{DC(1)}$,
\begin{align}
\displaystyle
&\int_{\Omega} \left( \bm \tau_h \cdot \bm \nabla_h \psi_{h} + \div \bm \tau_h  \psi_{h} \right) dx\notag\\
&\quad =  \sum_{F \in \mathcal{F}_h^i} \int_{F}  \{ \! \{ \bm \tau_h \} \!\}_{\omega,F} \cdot \bm n_F \Pi_F^0 [\![ \psi_{h} ]\!]_F ds + \sum_{F \in \mathcal{F}_h^{\partial}} \int_{F} ( \bm \tau_h \cdot \bm n_F) \Pi_F^0 \psi_{h} ds.  \label{wop=3a}
\end{align}

\end{lem}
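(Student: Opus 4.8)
The plan is to establish \eqref{wop=3a} as an exact identity by applying the classical divergence theorem element by element and then reorganizing the boundary contributions face by face, exploiting the normal continuity built into $V_h^{RT}$ together with the fact that the normal trace of an $\mathbb{RT}^0$ field is constant on each face.

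First I would fix $\bm \tau_h \in V_h^{RT}$ and $\psi_h \in V_h^{DC(1)}$. On a single simplex $T \in \mathbb{T}_h$ both restrictions are polynomial, so integration by parts gives
\[
\int_T \left( \bm \tau_h \cdot \bm \nabla \psi_h + (\div \bm \tau_h) \psi_h \right) dx = \int_{\partial T} (\bm \tau_h \cdot \bm n_T) \psi_h \, ds,
\]
with $\bm n_T$ the outward unit normal on $\partial T$. Summing over all $T \in \mathbb{T}_h$ recovers the left-hand side of \eqref{wop=3a} and turns the right-hand side into $\sum_{T} \int_{\partial T} (\bm \tau_h \cdot \bm n_T) \psi_h \, ds$. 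I would then collect these element-boundary integrals by faces. With the orientation convention of Section \ref{sec=mesh} (for $+ > -$, $\bm n_F$ points from $T_+$ into $T_-$, so $\bm n_{T_+} = \bm n_F$ and $\bm n_{T_-} = -\bm n_F$), each interior face $F = T_+ \cap T_-$ is visited twice and the two contributions combine into $\int_F [\![ (\bm \tau_h \psi_h) \cdot \bm n ]\!]_F \, ds$, while each boundary face contributes $\int_F (\bm \tau_h \cdot \bm n_F) \psi_h \, ds$.

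For interior faces I would apply the product-jump identity of Section \ref{sec=mesh},
\[
[\![ (\bm \tau_h \psi_h) \cdot \bm n ]\!]_F = \{\!\{ \bm \tau_h \}\!\}_{\omega,F} \cdot \bm n_F \, [\![ \psi_h ]\!]_F + [\![ \bm \tau_h \cdot \bm n ]\!]_F \, \{\!\{ \psi_h \}\!\}_{\overline{\omega},F}.
\]
Since $\bm \tau_h \in V_h^{RT}$ enforces $[\![ \bm \tau_h \cdot \bm n ]\!]_F = 0$ on every $F \in \mathcal{F}_h^i$, the second term drops and only $\{\!\{ \bm \tau_h \}\!\}_{\omega,F} \cdot \bm n_F \, [\![ \psi_h ]\!]_F$ remains. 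The crucial observation is that for $\bm \tau_h|_{T_\pm} = \bm a + b \bm x \in \mathbb{RT}^0(T_\pm)$ the normal component $\bm \tau_h \cdot \bm n_F = \bm a \cdot \bm n_F + b(\bm x \cdot \bm n_F)$ is constant on $F$, because $\bm x \cdot \bm n_F$ is constant on the affine face $F$; hence the convex combination $\{\!\{ \bm \tau_h \}\!\}_{\omega,F} \cdot \bm n_F$ is itself constant on $F$. By the defining property $\int_F (\Pi_F^0 g - g)\, ds = 0$ of the projection $\Pi_F^0$, multiplying by this face-constant and integrating replaces $[\![ \psi_h ]\!]_F$ by $\Pi_F^0 [\![ \psi_h ]\!]_F$, producing exactly $\int_F \{\!\{ \bm \tau_h \}\!\}_{\omega,F} \cdot \bm n_F \, \Pi_F^0 [\![ \psi_h ]\!]_F \, ds$. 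The same constancy argument on a boundary face $F = \partial T \cap \partial \Omega$ (where $\bm n_T = \bm n_F$) converts $\int_F (\bm \tau_h \cdot \bm n_F) \psi_h \, ds$ into $\int_F (\bm \tau_h \cdot \bm n_F) \Pi_F^0 \psi_h \, ds$, which completes the identity.

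The result is an exact equality with no estimates, so there is no genuine analytic difficulty; the step I would flag as the heart of the argument is the insertion of $\Pi_F^0$, which rests entirely on the fact that $\bm \tau_h \cdot \bm n_F$ is a single constant per face for $\mathbb{RT}^0$ fields. The only other point requiring care is the orientation bookkeeping relating $\bm n_F$ to the outward normals $\bm n_{T_\pm}$, since a sign slip there would corrupt the jump/average decomposition; this is purely mechanical but is where errors are most likely to creep in.
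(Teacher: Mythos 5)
Your proof is correct, and it is worth noting that the paper itself does not prove Lemma \ref{RTCRrel0} at all: it simply cites \cite[Lemma 3]{Ish24a}. Your write-up therefore supplies exactly the argument that the paper outsources, and it is the natural (essentially the only) one for this identity: element-wise divergence theorem, regrouping of the boundary integrals face by face under the orientation convention $\bm n_{T_+} = \bm n_F = -\bm n_{T_-}$, the product-jump decomposition $[\![ (\bm \tau_h \psi_h) \cdot \bm n ]\!]_F = \{\!\{ \bm \tau_h \}\!\}_{\omega,F} \cdot \bm n_F [\![ \psi_h ]\!]_F + [\![ \bm \tau_h \cdot \bm n ]\!]_F \{\!\{ \psi_h \}\!\}_{\overline{\omega},F}$, cancellation of the second term by the normal continuity built into $V_h^{RT}$, and finally the insertion of $\Pi_F^0$ justified by the fact that $\bm \tau_h \cdot \bm n_F = \bm a \cdot \bm n_F + b\, (\bm x \cdot \bm n_F)$ is a single constant on each affine face, so that the weighted average $\{\!\{ \bm \tau_h \}\!\}_{\omega,F} \cdot \bm n_F$ is face-constant and the defining property $\int_F (\Pi_F^0 g - g)\, ds = 0$ applies. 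All the delicate points you flagged (orientation bookkeeping, the role of $\omega_{T_+,F} + \omega_{T_-,F} = 1$ in the jump identity, constancy of the $\mathbb{RT}^0$ normal trace) are handled correctly, so your proposal stands as a complete, self-contained proof of \eqref{wop=3a}.
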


\begin{pf*}
The identity is established through an elementwise application of integration by parts to $\boldsymbol{\tau}_h\cdot\nabla_h\psi_h$ and $\operatorname{div}\boldsymbol{\tau}_h\,\psi_h$, in conjunction with the definitions of the jumps and weighted averages. The standard procedural details are not included here.
\end{pf*}

\color{black}
\begin{cor} \label{RTCRrel}
Let $p^{\prime} \in [1,\infty)$. For any $\bm w \in W^{1,p^{\prime}}(\Omega)^d$ and $\psi_h \in V_{h}^{DC(1)}$,
\begin{align}
\displaystyle
&\int_{\Omega} \left( {I}_h^{RT} \bm w \cdot \bm \nabla_h \psi_{h} + \div {I}_h^{RT} \bm w  \psi_{h} \right) dx\notag\\
&\quad =  \sum_{F \in \mathcal{F}_h^i} \int_{F}  \{ \! \{ \bm w \} \!\}_{\omega,F} \cdot \bm n_F \Pi_F^0 [\![ \psi_{h} ]\!]_F ds + \sum_{F \in \mathcal{F}_h^{\partial}} \int_{F} ( \bm w \cdot \bm n_F) \Pi_F^0 \psi_{h} ds.  \label{wop=3}
\end{align}

\end{cor}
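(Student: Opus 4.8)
The plan is to reduce the statement directly to Lemma \ref{RTCRrel0} by taking $\bm\tau_h := {I}_h^{RT}\bm w$. Since $\Omega$ is bounded and $p' \in [1,\infty)$, we have the embedding $W^{1,p'}(\Omega)^d \hookrightarrow W^{1,1}(\Omega)^d$, so the global RT interpolant ${I}_h^{RT}\bm w$ is well defined and, by construction, belongs to $V_h^{RT}$. Applying \eqref{wop=3a} with $\bm\tau_h = {I}_h^{RT}\bm w$ reproduces the left-hand side of \eqref{wop=3} verbatim, so it remains only to match the two interface/boundary sums, i.e. to show that $\bm w$ may be substituted for ${I}_h^{RT}\bm w$ inside every face integral on the right-hand side.

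For this I would exploit two facts. First, for every $F \in \mathcal{F}_h$ the factors $\Pi_F^0 [\![ \psi_h ]\!]_F$ (on interior faces) and $\Pi_F^0 \psi_h$ (on boundary faces) are constant on $F$, hence can be pulled out of the face integrals; what survives is the total normal flux $\int_F (\,\cdot\,)\cdot\bm n_F \, ds$. Second, the defining degrees of freedom of the RT interpolation in \eqref{RTint} preserve these fluxes face by face: for each $T$ and each face $F_{T,i}$ one has $\int_{F_{T,i}} {I}_T^{RT}\bm v \cdot \bm n_{T,i}\, ds = \int_{F_{T,i}} \bm v \cdot \bm n_{T,i}\, ds$, since the local shape functions are dual to these flux functionals.

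Next I would simplify the weighted averages. Because ${I}_h^{RT}\bm w \in V_h^{RT}$, its normal component is single-valued across each interior face, $[\![ {I}_h^{RT}\bm w \cdot \bm n ]\!]_F = 0$; using $\omega_{T_+,F} + \omega_{T_-,F} = 1$ this gives $\{\!\{ {I}_h^{RT}\bm w \}\!\}_{\omega,F}\cdot\bm n_F = ({I}_h^{RT}\bm w)\cdot\bm n_F$, the common normal trace. Symmetrically, $\bm w \in W^{1,p'}(\Omega)^d$ has a single-valued normal trace across interior faces, so $\{\!\{ \bm w \}\!\}_{\omega,F}\cdot\bm n_F = \bm w \cdot \bm n_F$. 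Combining these reductions with the flux preservation gives, for each interior face, the identity $\int_F \{\!\{ {I}_h^{RT}\bm w \}\!\}_{\omega,F}\cdot\bm n_F\, \Pi_F^0 [\![ \psi_h ]\!]_F\, ds = \int_F \{\!\{ \bm w \}\!\}_{\omega,F}\cdot\bm n_F\, \Pi_F^0 [\![ \psi_h ]\!]_F\, ds$, and analogously on boundary faces, which yields \eqref{wop=3}.

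The only point requiring care --- and what I regard as the main (though mild) obstacle --- is the bookkeeping of normal orientations: the flux identity coming from \eqref{RTint} is stated with the element-fixed normals $\bm n_{T,i}$, whereas the right-hand side of \eqref{wop=3} is written with the globally oriented $\bm n_F$. I would check that, on an interior face shared by $T_+$ and $T_-$, one has $\bm n_F = \pm \bm n_{T,i}$, so that the per-face flux identity holds with a single, cancelling sign; the regularity $\bm w \in W^{1,p'}(\Omega)^d \subset W^{1,1}(\Omega)^d$ guarantees that all the face traces entering these integrals are meaningful.
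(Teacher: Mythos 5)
Your proposal is correct and follows essentially the same route as the paper: the paper's proof also sets $\bm\tau_h := {I}_h^{RT}\bm w$ in Lemma \ref{RTCRrel0} and then invokes the definition of ${I}_h^{RT}$, which is precisely the face-by-face argument you spell out (constancy of $\Pi_F^0[\![\psi_h]\!]_F$ on each face, flux preservation $\int_F {I}_T^{RT}\bm v\cdot\bm n\,ds=\int_F \bm v\cdot\bm n\,ds$, and single-valuedness of the normal traces collapsing the weighted averages). Your version merely makes explicit the details the paper leaves implicit, including the orientation bookkeeping, so there is nothing to correct.
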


\begin{pf*}
Let $p^{\prime} \in [1,\infty)$. For any $\bm w \in W^{1,p^{\prime}}(\Omega)^d$, we set $\bm \tau_h := {I}_h^{RT} \bm w$ in \eqref{wop=3a}. Using the definition of $I_h^{RT}$ yields \eqref{wop=3}.
\end{pf*}

\begin{lem} \label{RTCR=est}
{Let $p^{\prime} \in (1,\infty)$}. For any $\bm w \in W^{1,p^{\prime}}(\Omega)^d$ and $\psi_h \in V_{h}^{DC(1)}$,
\begin{align}
\displaystyle
\left| \sum_{F \in \mathcal{F}_h^i} \int_{F}  \{ \! \{ \bm w \} \!\}_{\omega,F} \cdot \bm n_F \Pi_F^0 [\![ \psi_{h} ]\!]_F ds \right|
&\leq {C_{0a}(d,p^{\prime})} |\psi_{h}|_{p,J} \| \bm w \|_{W^{1,p^{\prime}}(\Omega)^d}, \label{wop=7} \\
\left| \sum_{F \in \mathcal{F}_h^{\partial}} \int_{F} ( \bm w \cdot \bm n_F) \Pi_F^0 \psi_{h} ds \right|
&\leq  {C_{0b}(d,p^{\prime})} |\psi_{h}|_{p,J}  \| \bm w \|_{W^{1,p^{\prime}}(\Omega)^d}, \label{wop=8}
\end{align}
where $\frac{1}{p} + \frac{1}{p^{\prime}}=1$. {Here, the constants $C_{0a}(d,p^{\prime})$ and $C_{0b}(d,p^{\prime})$ depend only on $d$ and $p^{\prime}$, and are independent of the mesh size $h$ and of the semi-regular mesh parameter $\gamma_0$.}
\end{lem}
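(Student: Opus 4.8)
The plan is to reduce both face sums to a product of the seminorm $|\psi_h|_{p,J}$ and the Sobolev norm $\|\bm w\|_{W^{1,p^\prime}(\Omega)^d}$ by estimating each face integral separately, controlling the trace of $\bm w$ on $F$ through the anisotropy-sensitive trace inequality of Lemma \ref{lem=trace}, and then summing via a discrete Hölder inequality. The whole computation hinges on the exponent identity $\frac{p-1}{p}=\frac{1}{p^\prime}$, which follows from $\frac1p+\frac1{p^\prime}=1$ and makes the power of $\ell_{T,F}$ in the skew-weight \eqref{weight} coincide exactly with the power produced by the trace inequality applied with $p^\prime$.

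For an interior face $F=T_{+}\cap T_{-}$, I would first apply Hölder's inequality with the conjugate pair $(p^\prime,p)$ to get
\[
\left| \int_{F} \{\!\{ \bm w \}\!\}_{\omega,F}\cdot \bm n_F\, \Pi_F^0 [\![ \psi_h ]\!]_F \, ds \right|
\leq \| \{\!\{ \bm w \}\!\}_{\omega,F} \|_{L^{p^\prime}(F)^d}\, \| \Pi_F^0 [\![ \psi_h ]\!] \|_{L^p(F)}.
\]
The triangle inequality bounds the weighted average by $\omega_{T_{+},F}\|\bm w_{+}\|_{L^{p^\prime}(F)^d}+\omega_{T_{-},F}\|\bm w_{-}\|_{L^{p^\prime}(F)^d}$, and I estimate each trace from its own element. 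Applying Lemma \ref{lem=trace} with $p$ replaced by $p^\prime$ on $T_{\pm}$, then using $h_T\leq h\leq 1$ together with the weighted Young inequality $a^{1-1/p^\prime}b^{1/p^\prime}\leq a+b$ to absorb the height term, gives $\|\bm w\|_{L^{p^\prime}(F)^d}\leq c\,\ell_{T,F}^{-1/p^\prime}\|\bm w\|_{W^{1,p^\prime}(T)^d}$ with $c$ depending only on $d$. Since $\frac{p-1}{p}=\frac{1}{p^\prime}$, the weight satisfies $\omega_{T,F}\,\ell_{T,F}^{-1/p^\prime}=\big(\ell_{T_{+},F}^{1/p^\prime}+\ell_{T_{-},F}^{1/p^\prime}\big)^{-1}=\kappa_{p,F*}^{1/p}$, so the $\ell_{T,F}$-powers cancel and
\[
\| \{\!\{ \bm w \}\!\}_{\omega,F} \|_{L^{p^\prime}(F)^d}
\leq c\,\kappa_{p,F*}^{1/p}\big( \|\bm w\|_{W^{1,p^\prime}(T_{+})^d}+\|\bm w\|_{W^{1,p^\prime}(T_{-})^d} \big).
\]

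Summing over $F\in\mathcal F_h^i$ and invoking discrete Hölder with exponents $p,p^\prime$ factors the bound into $\big(\sum_F \kappa_{p,F*}\|\Pi_F^0[\![\psi_h]\!]\|_{L^p(F)}^p\big)^{1/p}\le |\psi_h|_{p,J}$ times $\big(\sum_F(\|\bm w\|_{W^{1,p^\prime}(T_{+})^d}+\|\bm w\|_{W^{1,p^\prime}(T_{-})^d})^{p^\prime}\big)^{1/p^\prime}$. Because each simplex has exactly $d+1$ faces, each element contributes to at most $d+1$ terms of the latter sum, so a finite-overlap argument yields the factor $c\,\|\bm w\|_{W^{1,p^\prime}(\Omega)^d}$, proving \eqref{wop=7}. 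The boundary estimate \eqref{wop=8} follows by the same scheme on the single element $T_\partial$: there $[\![\psi_h]\!]_F=\psi_h|_{T_\partial}$, the trace inequality gives $\|\bm w\|_{L^{p^\prime}(F)^d}\leq c\,\ell_{T_\partial,F}^{-1/p^\prime}\|\bm w\|_{W^{1,p^\prime}(T_\partial)^d}$, and $\kappa_{p,F*}^{1/p}=\ell_{T_\partial,F}^{(1-p)/p}=\ell_{T_\partial,F}^{-1/p^\prime}$ matches exactly, after which discrete Hölder and finite overlap conclude.

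The main obstacle, and the crux of the argument, is robustness of the constant against aspect ratios and interior angles. This is secured by two ingredients working in tandem: the anisotropic trace inequality of Lemma \ref{lem=trace}, whose constant depends only on $d$ and whose scaling is governed by the height $\ell_{T,F}$ rather than by $h_T$; and the deliberate pairing of the skew-weight \eqref{weight} with the penalty \eqref{penalty1}, for which $\frac{p-1}{p}=\frac{1}{p^\prime}$ forces the exact cancellation $\omega_{T,F}\ell_{T,F}^{-1/p^\prime}=\kappa_{p,F*}^{1/p}$. The one delicate point requiring care is verifying that the hypothesis $h\leq 1$ lets the height contribution in the trace inequality be absorbed via Young's inequality without introducing any aspect-ratio-dependent factor.
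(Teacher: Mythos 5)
Your proposal is correct and follows essentially the same route as the paper's proof: H\"older's inequality on each face, the triangle inequality for the skew-weighted average, the anisotropic trace inequality of Lemma \ref{lem=trace} applied with exponent $p^{\prime}$, the exact cancellation between the weight \eqref{weight} and the penalty \eqref{penalty1} via $\frac{p-1}{p}=\frac{1}{p^{\prime}}$, and finally a discrete H\"older inequality over faces with a finite-overlap count. The only (cosmetic) difference is that you absorb the height term $h_T^{1/p^{\prime}}\|\bm w\|_{L^{p^{\prime}}(T)^d}^{1-1/p^{\prime}}|\bm w|_{W^{1,p^{\prime}}(T)^d}^{1/p^{\prime}}$ into $\|\bm w\|_{W^{1,p^{\prime}}(T)^d}$ immediately via Young's inequality and $h\leq 1$, whereas the paper carries this expression through the face summation and invokes Jensen's inequality at the end.
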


\begin{pf*}
 Suppose that $F \in \mathcal{F}_h^i$ with {$F = \partial T_{+} \cap \partial T_{-}$, $T_{+},T_{-} \in \mathbb{T}_h$.} Using the H\"older's inequality, the weighted average and the trace inequality \eqref{trace} yields
\begin{align*}
\displaystyle
&\int_{F} \left| \{ \! \{ \bm w \} \!\}_{\omega,F} \cdot \bm n_F \Pi_F^0 [\![ \psi_{h} ]\!]_F \right | ds \\
&\quad \leq \| \omega_{T_{+},F} {\bm {w}_{+}} + \omega_{T_{-},F} {\bm {w}_{-}} \|_{L^{p^{\prime}}(F)^d} \|  \Pi_F^0 [\![ \psi_{h} ]\!]_F \|_{L^p(F)} \\
&\quad \leq \left( \omega_{T_{+},F} \| \bm {w}_{+} \|_{L^{p^{\prime}}(F)^d} + \omega_{T_{-},F} \| \bm {w}_{-} \|_{L^{p^{\prime}}(F)^d} \right)  \|  \Pi_F^0 [\![ \psi_{h} ]\!]_F \|_{L^p(F)} \\
&\quad \leq {C^{Tr}(d,p^{\prime})} \left( \| \bm {w}_{+} \|_{L^{p^{\prime}}(T_+)^d} + h_{T_+}^{\frac{1}{p^{\prime}}}  \| \bm {w}_{+} \|_{L^{p^{\prime}}(T_+)^d}^{1- \frac{1}{p^{\prime}}} | \bm {w}_{+} |_{W^{1,p^{\prime}}(T_+)^d}^{\frac{1}{p^{\prime}}} \right)  \omega_{T_{+},F} \ell_{T_+,F}^{- \frac{1}{p^{\prime}}} \|  \Pi_F^0 [\![ \psi_{h} ]\!]_F \|_{L^p(F)} \\
&\quad \quad + {C^{Tr}(d,p^{\prime})}  \left( \| \bm {w}_{-} \|_{L^{p^{\prime}}(T_{-})^d} + h_{T_{-}}^{\frac{1}{p^{\prime}}}  \| \bm {w}_{-} \|_{L^{p^{\prime}}(T_{-})^d}^{1- \frac{1}{p^{\prime}}} | \bm {w}_{-} |_{W^{1,p^{\prime}}(T_{-})^d}^{\frac{1}{p^{\prime}}} \right) \omega_{T_{-},F} \ell_{T_{-},F}^{- \frac{1}{p^{\prime}}} \|  \Pi_F^0 [\![ \psi_{h} ]\!]_F \|_{L^p(F)} \\
&\quad \leq  {C^{Tr}(d,p^{\prime})} \Bigg \{  \left( \| \bm {w}_{+} \|_{L^{p^{\prime}}(T_+)^d} + h_{T_+}^{\frac{1}{p^{\prime}}}  \| \bm {w}_{+} \|_{L^{p^{\prime}}(T_+)^d}^{1- \frac{1}{p^{\prime}}} | \bm {w}_{+} |_{W^{1,p^{\prime}}(T_+)^d}^{\frac{1}{p^{\prime}}} \right)^{p^{\prime}} \\
&\quad \quad + \left( \| \bm {w}_{-} \|_{L^{p^{\prime}}(T_{-})^d} + h_{T_{-}}^{\frac{1}{p^{\prime}}}  \| \bm {w}_{-} \|_{L^{p^{\prime}}(T_{-})^d}^{1- \frac{1}{p^{\prime}}} | \bm {w}_{-} |_{W^{1,p^{\prime}}(T_{-})^d}^{\frac{1}{p^{\prime}}} \right)^{p^{\prime}} \Bigg \}^{\frac{1}{p^{\prime}}} \\
&\quad \quad \times \left(\omega_{T_{+},F}^p \ell_{T_+,F}^{- \frac{p}{p^{\prime}}} + \omega_{T_{-},F}^p \ell_{T_{-},F}^{- \frac{p}{p^{\prime}}}\right)^{\frac{1}{p}}  \|  \Pi_F^0 [\![ \psi_{h} ]\!]_F \|_{L^p(F)}.
\end{align*}
{Using the H\"older inequality again and the fact that $h \leq 1$}, we have
\begin{align*}
\displaystyle
&\left| \sum_{F \in \mathcal{F}_h^i} \int_{F}  \{ \! \{ \bm w \} \!\}_{\omega,F} \cdot \bm n_F \Pi_F^0 [\![ \psi_{h} ]\!]_F ds \right| \\
&\quad \leq {C_1(d,p^{\prime})} \sum_{F \in \mathcal{F}_h^i}  \left(\omega_{T_{+},F}^p \ell_{T_+,F}^{- \frac{p}{p^{\prime}}} + \omega_{T_{-},F}^p \ell_{T_{-},F}^{- \frac{p}{p^{\prime}}}\right)^{\frac{1}{p}}  \|  \Pi_F^0 [\![ \psi_{h} ]\!]_F \|_{L^p(F)} \\
&\quad \quad \times \sum_{T \in \mathbb{T}_F}  \left( \| \bm {w} \|_{L^{p^{\prime}}(T)^d} + h_{T}^{\frac{1}{p^{\prime}}}  \| \bm {w} \|_{L^{p^{\prime}}(T)^d}^{1- \frac{1}{p^{\prime}}} | \bm {w} |_{W^{1,p^{\prime}}(T)^d}^{\frac{1}{p^{\prime}}} \right) \\
&\quad \leq {C_2(d,p^{\prime})} \left( \sum_{F \in \mathcal{F}_h^i}  \left(\omega_{T_{+},F}^p \ell_{T_+,F}^{- \frac{p}{p^{\prime}}} + \omega_{T_{-},F}^p \ell_{T_{-},F}^{- \frac{p}{p^{\prime}}}\right)  \|  \Pi_F^0 [\![ \psi_{h} ]\!]_F \|_{L^p(F)}^p \right)^{\frac{1}{p}} \\
&\quad \quad \times \left( \sum_{F \in \mathcal{F}_h^i}  \sum_{T \in \mathbb{T}_F}  \left( \| \bm {w} \|_{L^{p^{\prime}}(T)^d} + h_{T}^{\frac{1}{p^{\prime}}}  \| \bm {w} \|_{L^{p^{\prime}}(T)^d}^{1- \frac{1}{p^{\prime}}} | \bm {w} |_{W^{1,p^{\prime}}(T)^d}^{\frac{1}{p^{\prime}}} \right)^{p^{\prime}} \right)^{\frac{1}{p^{\prime}}} \\
&\quad \leq {C_{0a}(d,p^{\prime})} |\psi_{h}|_{p,J} \| \bm w \|_{W^{1,p^{\prime}}(\Omega)^d},
\end{align*}
which is  the inequality \eqref{wop=7} together with the weight \eqref{weight} and Jensen inequality, {where the constants $C_1(d,p^{\prime})$ and $C_2(d,p^{\prime})$ depend only on $d$ and $p^{\prime}$, and are independent of the mesh size $h$ and of the semi-regular mesh parameter $\gamma_0$.} Here, $\mathbb{T}_F$ denotes the set of the simplices in $\mathbb{T}_h$ that share $F$ as a common face.

By an analogous argument, the estimate \eqref{wop=8} holds.
\end{pf*}

\section{Main theorem} \label{sec;proof=main}
In this section, we present the discrete Sobolev inequalities on anisotropic meshes. Our goal is to prove the discrete Sobolev inequalities under the semi-regular mesh condition. The proof combines an anisotropic trace estimate, two-step affine/Piola maps, RT-interpolation stability, and a weighted discrete integration-by-parts formula.

Let $q \in (1,\infty)$ with $\frac{1}{q} + \frac{1}{q^{\prime}}=1$. For any $u \in L^q(\Omega)$ and $v \in L^{q^{\prime}}(\Omega)$,
\begin{align*}
\displaystyle
\langle u , v \rangle := \int_{\Omega} u v dx = \sum_{T \in \mathbb{T}_h}  \int_{T} u v dx.
\end{align*}
We define a functional space as
\begin{align*}
\displaystyle
X_0 &:= \left \{ \psi \in L^{q^{\prime}}(\Omega): \ \| \psi \|_{L^{q^{\prime}}(\Omega)} = 1 \right \}.
\end{align*}

\begin{thr}[Discrete $L^q$-$L^p$ Sobolev inequality] \label{main1=thr}
{Suppose that Assumption \ref{neogeo=assume} holds.} Let $p \in (1,\infty)$ and $q \in (1,\infty)$ be such that \eqref{Sobolev511},
that is $1 - \frac{d}{p} \geq - \frac{d}{q}$ and $q \leq p$. Then,
\begin{align}
\displaystyle
\| \varphi_h \|_{L^q(\Omega)} \leq {C^{dS0}(d,\Omega,p,q,\gamma_0)} |\varphi_h|_{p,V_h} \quad \forall \varphi_h \in V_h \in \{V_{h}^{DCCR}, V_h^{CR},V_{h0}^{CR} \}, \label{DSineq0}
\end{align}
where {the constant $C^{dS0}(d,\Omega,p,q,\gamma_0)$ depends only on $d$, $\Omega$, $p$, $q$ and the semi-regular mesh parameter $\gamma_0$, and is independent of $h$ and of the aspect ratios and angles of the simplices.}

\end{thr}

\begin{pf*}
Let $\varphi_h \in V_h$. Using the triangle inequality yields
\begin{align}
\displaystyle
\| \varphi_h \|_{L^q(\Omega)} 
\leq \| \varphi_h - \Pi_h^0 \varphi_h \|_{L^q(\Omega)} + \| \Pi_h^0 \varphi_h \|_{L^q(\Omega)}. \label{dissov=1}
\end{align}
{Because $\Omega$ is bounded and $q \leq p$,} by using the estimate \eqref{chap822} for each element, we obtain the following
\begin{align}
\displaystyle
\| \varphi_h - \Pi_h^0 \varphi_h \|_{L^q(\Omega)} 
&\leq {C_3(d,\Omega,p,q)} |\varphi_h|_{W^{1,p}(\mathbb{T}_h)}, \label{dissov=2}
\end{align}
{where $C_3(d,\Omega,p,q)$ depends only on $d$, $\Omega$, $p$ and $q$, and is independent of $h$ and of the aspect ratios and angles of the simplices.} Furthermore, because $\Omega$ is bounded and $q \leq p$, then $L^p(\Omega) \hookrightarrow L^q(\Omega)$ and 
\begin{align}
\displaystyle
 \| \Pi_h^0 \varphi_h \|_{L^q(\Omega)}
 &\leq {C_4(\Omega,p,q)}  \| \Pi_h^0 \varphi_h \|_{L^p(\Omega)}, \label{dissov=add}
\end{align}
{where $C_4(\Omega,p,q)$ depends only on $\Omega$, $p$ and $q$, and is independent of $h$ and of the aspect ratios and angles of the simplices.}

The $L^p$-norm of $\Pi_h^0 \varphi_h$ can be written in dual form
\begin{align*}
\displaystyle
\| \Pi_h^0 \varphi_h \|_{L^p(\Omega)} = \sup_{\psi \in X_0} \langle \Pi_h^0 \varphi_h , \psi \rangle.
\end{align*}

For any $\psi \in X_0$, we set $\psi_h := \Pi_h^0 \psi$. Then,
\begin{align*}
\displaystyle
\psi_h|_T = \frac{1}{|T|_d} \int_T \psi dx.
\end{align*}
Therefore, we have
\begin{align*}
\displaystyle
 \langle \Pi_h^0 \varphi_h , \psi \rangle
 &= \sum_{T \in \mathbb{T}_h} \Pi_T^0 \varphi_h  \int_T \psi dx 
 = \sum_{T \in \mathbb{T}_h} \Pi_T^0 \varphi_h \psi_h|_T |T|_d.
\end{align*}
On the other hand, 
\begin{align*}
\displaystyle
 \langle \Pi_h^0 \varphi_h , \psi_h \rangle = \sum_{T \in \mathbb{T}_h} \int_{T} \Pi_h^0 \varphi_h \psi_h dx =  \sum_{T \in \mathbb{T}_h} \Pi_T^0 \varphi_h \int_T \psi_h|_T dx = \sum_{T \in \mathbb{T}_h} \Pi_T^0 \varphi_h \psi_h|_T |T|_d,
\end{align*}
which leads to $ \langle \Pi_h^0 \varphi_h , \psi \rangle =  \langle \Pi_h^0 \varphi_h , \psi_h \rangle$. Using the H\"older's inequality yields
\begin{align*}
\displaystyle
\| \psi_h \|_{L^{p^{\prime}}(\Omega)}^{p^{\prime}}
&= \sum_{T \in \mathbb{T}_h} \int_T |\psi_h|^{p^{\prime}} dx =  \sum_{T \in \mathbb{T}_h} |T|_d^{1 - p^{\prime}} \left| \int_T \psi dx \right|^{p^{\prime}} \\
&\leq  \sum_{T \in \mathbb{T}_h} |T|_d^{1 - p^{\prime} + \frac{ p^{\prime}}{p}} \int_T | \psi |^{p^{\prime}} dx =  \sum_{T \in \mathbb{T}_h} \int_T |\psi|^{p^{\prime}} dx = \| \psi \|_{L^{p^{\prime}}(\Omega)}^{p^{\prime}},
\end{align*}
which leads to $\| \psi_h \|_{L^{p^{\prime}}(\Omega)} \leq  1$. 

We set $\xi_h := \psi_h - \bar{\psi}_h$, where $\bar{\psi}_h := \frac{1}{|\Omega|_d} \int_{\Omega} \psi_h dx$. Then, $\int_{\Omega} \xi_h dx = 0$. From the Bogovski\u{\i}-type lemma (Lemma \ref{lem:Bogovskii}), there exists $\bm v_0 \in W_0^{1,p^{\prime}}(\Omega)^d$ such that
\begin{align*}
\displaystyle
\div \bm v_0 &= \xi_h,\\
|\bm v_0|_{W^{1,p^{\prime}}(\Omega)^d} &\leq {C^{Bog}(d,\Omega,p^{\prime})} \| \xi_h \|_{L^{p^{\prime}}(\Omega)}.
\end{align*}
Setting $\bm v_g(\bm x) := \frac{\bar{\psi}_h}{d}(\bm x - \bm x_g)$, where $\bm x_g := \frac{1}{|\Omega|}\int_{\Omega} \bm x dx$, $\bm x \in \Omega$. Then, $\bm \nabla \bm v_g =  \frac{\bar{\psi}_h}{d} I_d$, where $I_d$ is the $d \times d$ unit matrix. This implies
\begin{align*}
\displaystyle
\div \bm v_g = \bar{\psi}_h.
\end{align*}
Furthermore, we have
\begin{align*}
\displaystyle
\| \bm v_g \|_{W^{1,p^{\prime}}(\Omega)^d}^{p^{\prime}}
&= \| \bm v_g \|_{L^{p^{\prime}}(\Omega)^d}^{p^{\prime}} + \| \bm \nabla \bm v_g \|_{L^{p^{\prime}}(\Omega)^{d \times d}}^{p^{\prime}} \\
&\leq \left( \frac{|  \bar{\psi}_h |}{d} \right)^{p^{\prime}} \diam(\Omega)^{p^{\prime}} | \Omega |_d + \left( \frac{|  \bar{\psi}_h |}{d} \right)^{p^{\prime}} \| I_d \|_F^{p^{\prime}}  | \Omega |_d \\
&\leq d^{- p^{\prime}} \left( \diam(\Omega)^{p^{\prime}} + d^{\frac{p^{\prime}}{2}}  \right) \| \psi_h \|_{L^{p^{\prime}}(\Omega)}^{p^{\prime}},
\end{align*}
where $\| \cdot \|_F$ is the Frobenius norm and we used that $\| I_d \|_F = d^{\frac{1}{2}}$ and
\begin{align*}
\displaystyle
| \bar{\psi}_h |
&\leq \frac{1}{ | \Omega |_d} \int_{\Omega} |\psi_h| dx
\leq  | \Omega |_d^{- \frac{1}{p^{\prime}}} \| \psi_h \|_{L^{p^{\prime}}(\Omega)}.
\end{align*}
We set $\bm v := \bm v_0 + \bm v_g \in W^{1,p^{\prime}}(\Omega)^d$. Then, it holds that
\begin{align*}
\displaystyle
\div \bm v &= \div \bm v_0 + \div  \bm v_g = \psi_h, \\
\| \bm v \|_{W^{1,p^{\prime}}(\Omega)^d}
&\leq \| \bm v_0 \|_{W^{1,p^{\prime}}(\Omega)^d} + \| \bm v_g \|_{W^{1,p^{\prime}}(\Omega)^d} \\
&\leq {C_5(d,\Omega,p^{\prime})} \| \psi_h \|_{L^{p^{\prime}}(\Omega)} \leq {C_5(d,\Omega,p^{\prime})},
\end{align*}
where we used that
\begin{align*}
\displaystyle
\| \bm v_0 \|_{W^{1,p^{\prime}}(\Omega)^d}
&\leq {C_6(d,\Omega,p^{\prime})} \| \xi_h \|_{L^{p^{\prime}}(\Omega)}
\leq {C_6(d,\Omega,p^{\prime})} \left( \| \psi_h \|_{L^{p^{\prime}}(\Omega)} +  \| \bar{\psi}_h \|_{L^{p^{\prime}}(\Omega)} \right) \\
&\leq {C_7(d,\Omega,p^{\prime})}  \| \psi_h \|_{L^{p^{\prime}}(\Omega)}.
\end{align*}
{Here, $C_5(d,\Omega,p^{\prime})$, $C_6(d,\Omega,p^{\prime})$ and $C_7(d,\Omega,p^{\prime})$ depend only on $d$, $\Omega$ and $p^{\prime}$ and is independent of $h$ and of the aspect ratios and angles of the simplices.}

It holds that
\begin{align*}
\displaystyle
\div I_h^{RT} \bm v = \Pi_h^0 \div \bm v =  \Pi_h^0 \psi_h = \psi_h.
\end{align*}
Using the stability of the RT interpolation (Corollary \ref{RT=sta}) yields
\begin{align*}
\displaystyle
\|  I_h^{RT} \bm v \|_{L^{p^{\prime}}(\Omega)^d}
&\leq {C^{RTsta}(d,p^{\prime},\gamma_0)} \| \bm v \|_{W^{1,p^{\prime}}(\Omega)^d} 
\leq {C_8(d,\Omega,p^{\prime},\gamma_0)}  \| \psi_h \|_{L^{p^{\prime}}(\Omega)} \leq {C_8(d,\Omega,p^{\prime},\gamma_0)},
\end{align*}
{where $C_8(d,\Omega,p^{\prime},\gamma_0)$ depends only on $d$, $\Omega$, $p^{\prime}$ and the semi-regular mesh parameter $\gamma_0$, and is independent of $h$ and of the aspect ratios and angles of the simplices.}

From Lemma \ref{RTCRrel} and the definition of $\Pi_h^0$, we obtain
\begin{align*}
\displaystyle
 \langle \Pi_h^0 \varphi_h , \psi \rangle 
 &=  \langle \Pi_h^0 \varphi_h , \psi_h \rangle = \sum_{T \in \mathbb{T}_h} \int_T \varphi_h  \psi_h dx \\
 &= \sum_{T \in \mathbb{T}_h} \int_T \varphi_h \div I_h^{RT} \bm v dx \\
 &= - \sum_{T \in \mathbb{T}_h} \int_T {I}_h^{RT} \bm v \cdot \bm \nabla \varphi_{h} dx + \sum_{F \in \mathcal{F}_h^i} \int_{F}  \{ \! \{ \bm v \} \!\}_{\omega,F} \cdot \bm n_F \Pi_F^0 [\![ \varphi_{h} ]\!]_F ds + \sum_{F \in \mathcal{F}_h^{\partial}} \int_{F} ( \bm v \cdot \bm n_F) \Pi_F^0 \varphi_{h} ds.
\end{align*}
Using Lemma \ref{RTCR=est} and the above results yields
\begin{align*}
\displaystyle
\left|  \langle \Pi_h^0 \varphi_h , \psi \rangle  \right|
&\leq {C_8(d,\Omega,p^{\prime},\gamma_0)} |\varphi_h|_{W^{1,p}(\mathbb{T}_h)} + {C_9(d,p^{\prime})} |\varphi_{h}|_{p,J} \leq {C_{10}(d,\Omega,p^{\prime},\gamma_0)} | \varphi_h |_{p,V_h},
\end{align*}
which leads to
\begin{align}
\displaystyle
\| \Pi_h^0 \varphi_h \|_{L^p(\Omega)}
&\leq {C_{10}(d,\Omega,p^{\prime},\gamma_0)} | \varphi_h |_{p,V_h}. \label{dissov=3}
\end{align}
{Here, $C_9(d,p^{\prime})$ depends only on $d$ and $p^{\prime}$, and $C_{10}(d,\Omega,p^{\prime},\gamma_0)$ depends only on $d$, $\Omega$, $p^{\prime}$ and the semi-regular mesh parameter $\gamma_0$. They are independent of $h$ and of the aspect ratios and angles of the simplices.}

Gathering the inequalities \eqref{dissov=1}, \eqref{dissov=2}, \eqref{dissov=add} and \eqref{dissov=3}, we conclude the target estimate.
\end{pf*}

\begin{rem}
When $q \> p$, the proof of Theorem \ref{main1=thr} can not be applied. The $L^q$-norm of $\Pi_h^0 \varphi_h$ can be written in dual form
\begin{align*}
\displaystyle
\| \Pi_h^0 \varphi_h \|_{L^q(\Omega)} = \sup_{\psi \in X_0} \langle \varphi_h , \psi_h \rangle.
\end{align*}
From the bilateral estimate,
\begin{align*}
\displaystyle
\| \Pi_h^0 \varphi_h \|_{L^q(\Omega)}
&\leq \| \varphi_h \|_{L^p(\Omega)}  \sup_{\psi \in X_0} \| \psi_h \|_{L^{p^{\prime}}(\Omega)}.
\end{align*}
Because $\psi_h|_T$ is a constant, 
\begin{align*}
\displaystyle
\| \psi_h \|_{L^{p^{\prime}}(T)}
&\leq c |T|_d^{\frac{1}{p^{\prime}} - \frac{1}{q^{\prime}}} \| \psi_h \|_{L^{q^{\prime}}(T)} \leq c  |T|_d^{\frac{1}{q} - \frac{1}{p}} \| \psi_h \|_{L^{q^{\prime}}(T)}.
\end{align*}
When an isotropic mesh is used, it is expressed as $|T|_d \approx h_T^d$. Then, 
\begin{align*}
\displaystyle
\| \psi_h \|_{L^{p^{\prime}}(T)}
&\leq c  h_T^{d \left( \frac{1}{q} - \frac{1}{p} \right)} \| \psi_h \|_{L^{q^{\prime}}(T)} \to \infty \quad \text{as $h_T \to 0$}.
\end{align*}
\end{rem}

\section{Concluding remarks} \label{Con=rem}

This paper presents discrete $L^q-L^p$ Sobolev inequalities for nonconforming finite elements under a semi-regular mesh condition, with constants that remain independent of the angle and aspect ratio of simplices. The proof integrates anisotropy-aware trace/projection estimates with RT interpolation and face-weighted flux control. These findings extend the classical shape-regular theory to anisotropic partitions and directly support stability and a priori analyses for CR and Nitsche schemes.

However, when $q \> p$, it remains to show the following restricted discrete Sobolev inequality, see \cite[Lemma 4]{Ish25a}. For that purpose, we imposed a weak elliptic regularity assumption to obtain a discrete Sobolev inequality: Let $p \in [2 , \infty )$ if $d=2$, or $p \in [2,6]$ if $d=3$, then $p^{\prime} \in (\frac{2d}{d+2} , 2]$. We assume that, for any $g \in L^{p^{\prime}}(\Omega)$, the variational problem 
\begin{align*}
\displaystyle
\int_{\Omega} \bm \nabla z \cdot  \bm \nabla w dx = \int_{\Omega} g w dx \quad \forall w \in H_0^1(\Omega) 
\end{align*}
has a unique solution $z \in H_0^1(\Omega)$ that belongs to $W^{2,p^{\prime}}(\Omega)$ and the elliptic regularity estimate
\begin{align*}
\displaystyle
\| z \|_{W^{2,p^{\prime}}(\Omega)} \leq c \| g \|_{L^{p^{\prime}}(\Omega)}
\end{align*}
holds. In addition to the weak elliptic regularity assumption, we impose that Assumptions \ref{neogeo=assume} and there exists a positive constant {$C^{(p,2)}$} independent of $T$ and $h$ such that
\begin{align*}
\displaystyle
\max_{T \in \mathbb{T}_h} \left( {|{T}|_d}^{\frac{1}{p} - \frac{1}{2}} h_T \right) \leq C^{(p,2)}.
\end{align*}
Then, 
\begin{align*}
\displaystyle
\| \varphi_h \|_{L^p(\Omega)} \leq c |\varphi_h|_{H^1(\mathbb{T}_h)} \quad \forall \varphi_h \in V_{h0}^{CR}. 
\end{align*}
In this statement, when $p=2$, the domain is required to be convex to satisfy the weak elliptic regularity assumption. However, in Theorem \ref{main1=thr}, the assumption that the domain is convex is removed. This means that the weak elliptic regularity assumption may be able to be removed. This issue is left for future work.

\end{document}